\documentclass[leqno,12pt]{amsproc}

\usepackage{amsmath, amsthm, amsfonts, amssymb, latexsym, amscd, stackrel,url,hyperref}
\usepackage{enumerate}
\usepackage{color}
 
 
 


\usepackage{xy}
\xyoption{all}


\makeatletter
\def\url@smallstyle{%
  \@ifundefined{selectfont}{\def\UrlFont{\sf}}{\def\UrlFont{\small\ttfamily}}}
\makeatother
\urlstyle{small}


\numberwithin{equation}{section}

\theoremstyle{plain}
    \newtheorem{theorem}[equation]{Theorem}
    \newtheorem{lemma}[equation]{Lemma}
    
    \newtheorem{proposition}[equation]{Proposition}

    \newtheorem*{theorem*}{Theorem}
    \newtheorem*{proposition*}{Proposition}
    \newtheorem*{corollary*}{Corollary}
    \newtheorem*{lemma*}{Lemma}
    \newtheorem*{conjecture*}{Conjecture}
    \newtheorem{definition-theorem}[equation]{Definition/Theorem}
    \newtheorem{definition-lemma}[equation]{Definition/Lemma}
\theoremstyle{definition}
    \newtheorem{definition}[equation]{Definition}
    \newtheorem{example}[equation]{Example}

    \newtheorem{remark}[equation]{Remark}


   
    \newcommand{\R}{\mathbb{R}}

    \newcommand{\Z}{\mathbb{Z}}

      \newcommand{\HC}{\mathcal{HC}}

     \renewcommand{\S}{\mathcal{S}}

   	\renewcommand{\phi}{\varphi}
    \let\epsilon\varepsilon

    \newcommand{\Bounded}{\operatorname{B}}
    \newcommand{\Compact}{\operatorname{K}}
    \newcommand{\CB}{{\operatorname{CB}}}




\DeclareMathOperator{\Hom}{Hom}

\DeclareMathOperator{\Ind}{Ind}
\DeclareMathOperator{\Res}{Res}

\DeclareMathOperator{\HMod}{\mathsf{Hilb}}
\DeclareMathOperator{\CMod}{\mathsf{C*Mod}}
\DeclareMathOperator{\OMod}{\mathsf{OpMod}}

\DeclareMathOperator{\SFMod}{\mathsf{SFMod}}

\makeatletter
\newcommand{\eqqed}{\displaymath@qed}
\makeatother


\begin{document}
\title[Parabolic Induction  and Operator Spaces]{Parabolic Induction, Categories of Representations and Operator Spaces}

\author{Tyrone Crisp}
\thanks{Partially supported by the Danish National Research Foundation through the Centre for Symmetry and Deformation (DNRF92).}
\address{Department of Mathematics, University of Copenhagen,
Universitetsparken 5,
2100 Copenhagen \O,  Denmark.}
\email{crisp@math.ku.dk}

\author{Nigel Higson}
\thanks{Partially supported by the US National Science Foundation through the grant DMS-1101382.}
\address{Department of Mathematics, Penn State University, University Park, PA 16802, USA.}
\email{higson@math.psu.edu}

\subjclass[2010]{Primary 	22E45; Secondary 46L07, 46H15}

\begin{abstract}
We study some aspects of the functor of parabolic induction within the context of   reduced group $C^*$-algebras and related operator algebras.   We   explain how Frobenius reciprocity  fits naturally within the context of operator modules, and examine the prospects for an operator algebraic formulation of Bernstein's reciprocity theorem (his second adjoint theorem).\end{abstract}


\maketitle

\section{Introduction}
\label{intro_section}
 
Harish-Chandra famously  decomposed the regular representation of a real reductive group $G$ into an explicit integral of its isotypical parts.  His program to do so  had two parts:
\begin{enumerate}[\rm (a)]
\item the classification of  the so-called \emph{cuspidal} representations of $G$,    and of the Levi subgroups of $G$; and 
\item   the construction, by the process of \emph{para\-bolic induction}, of further representations, sufficiently   many in number  to  decompose the regular representation.  
\end{enumerate}
The cuspidal representations of a real reductive group $G$ are the irreducible and  unitary representations of $G$ that are  square-integrable, modulo center.  Their classification fits well with ideas from $C^*$-algebra $K$-theory and noncommutative geometry.  Indeed the classification was an important source of inspiration for the formulation of the Baum-Connes conjecture.  But the functor  of parabolic induction has   received less attention from operator algebras and noncommutative geometry.  Our purpose here is to continue the effort begun in \cite{Clare} and in \cite{CCH1,CCH2} to address this imbalance, if only modestly.

A few years ago Pierre Clare explained in \cite{Clare} how parabolic induction fits into the theory of Hilbert $C^*$-modules and bimodules in a way that is very similar to  Marc Rieffel's well known treatment of ordinary induction \cite{Rieffel}.  In joint work with Clare \cite{CCH1,CCH2} we studied parabolic induction as a functor between categories of Hilbert $C^*$-modules.    Using a considerable amount of representation theory, due to Harish-Chandra, Langlands and others, we constructed an adjoint Hilbert $C^*$-bimodule,  used it to define a functor of \emph{parabolic restriction} between categories of Hilbert $C^*$-modules, and proved that parabolic induction and restriction are two-sided ``local'' adjoints of one another.

 A drawback of our work was that the concept of local adjunction is significantly weaker than the standard category-theoretic notion of adjunction.      This shortcoming was unavoidable: there is no category-theoretic adjunction at the Hilbert $C^*$-module level. Moreover  the natural candidates for the unit maps of the sought-for adjunctions are even not properly defined at the Hilbert $C^*$-module level.
 
 The purpose of this article is to examine the extent to which the shortcomings of the Hilbert $C^*$-module theory can be remedied by adjusting the context a little.    To this end we shall study new categories consisting of group or $C^*$-algebra representations on operator spaces.  We shall prove that the new categories have only the familiar  irreducible objects, so that they present a plausible context for representation theory.  Then we shall formulate and prove a simple theorem about adjoint pairs of functors between the new categories of operator space modules over $C^*$-algebras (as opposed to Hilbert $C^*$-modules).   As we shall explain, this implies  in a very simple way (that does not require any sophisticated representation theory) a Frobenius reciprocity theorem for parabolic induction (the theorem is that the functor of parabolic induction has a left adjoint, which we shall describe  explicitly, along with the adjunction isomorphism).
 
 Secondly, we shall examine in detail the form of parabolic induction and restriction at the level of   Harish-Chandra's Schwartz algebra   in the particular case where $G=SL(2,\R)$.  We shall summarize the tempered representation theory  of $G$ in the form of a Morita equivalence between the Harish-Chandra algebra and a simpler and more accessible algebra.  Our reason for doing this is to  formulate and prove    a ``second adjoint theorem'' for tempered representations in this case, along the lines of Bernstein's fundamental second adjoint theorem (that parabolic induction also has an explicit right adjoint) in the smooth representation theory of reductive $p$-adic groups \cite{Bernstein-p-adic,Bernstein-2-adjoint}.
 
 We shall say a good deal more elsewhere about our second adjoint theorem for tempered representations.  Our reason for introducing the result  here is to use it as a test for measuring the potential usefulness of new operator-algebraic contexts for representation theory. { To this end, we shall conclude by using the explicit formulas obtained for the Harish-Chandra algebra to explore the prospects for an elaboration of the operator space module Frobenius   reciprocity relation analyzed in Section~\ref{sec-op-sp-mod}  so as to include Bernstein's second adjunction.  We shall give one concrete suggestion about how this might be achieved in Section~\ref{sec-op-alg-bernstein}.}

\section{Categories of  Operator Space Modules}
\label{sec-op-sp-mod}

We shall study operator space modules over $C^*$-algebras and, later on, over operator algebras. For the most part we shall refer to the  monograph \cite{BlM} for background on operator spaces, but we shall repeat some of the  basic definitions here.

But before we start, let us explain our point of view.  In the  representation theory of real reductive groups  there is broad agreement about the concepts of \emph{irreducible} representation that are appropriate for study, along with the associated concepts of equivalence among irreducible representations.  But representations that lie well beyond the irreducible representations are little-studied in representation theory.  From the point of view of noncommutative geometry this is an awkward omission, since for example the $K$-theory studied in noncommutative geometry, and used to formulate the Baum-Connes conjecture, involves representations that are far from irreducible.   So it is of interest to explore some of the potentially convenient categories of representations that operator algebra theory provides. 

As we mentioned in the introduction, our  immediate concern here is not $K$-theory but  {parabolic induction, together with adjunction theorems such as Frobenius reciprocity.  But here, too, the choice of a category of representations matters.  Our main observation is that operator spaces  can offer a very convenient starting point from    which to begin an examination of Frobenius reciprocity and related matters, because the theorem assumes a particularly elementary form there.

To continue, recall that an  \emph{operator space} is a complex vector space $X$ equipped with a family of Banach space norms on the spaces $M_n(X)$ of $n\times n$ matrices over $X$ that satisfy the following two conditions:
\begin{enumerate}[\rm (a)]
 \item If   $x\in M_n(X)$ and $a,b\in M_n(\mathbb C)$, 
 then 
\[
\| axb \| \leq \| a\| \|x\| \|b\|.
\]
 \item The norm of a block-diagonal matrix is the maximum of the norms of the diagonal blocks.
\end{enumerate}
A linear map $T:X\to Y$ between operator spaces induces maps 
\[
M_n(T):M_n(X)\longrightarrow M_n(Y)
\]
 by applying $T$ to each matrix entry, and we say that $T$ is \emph{completely bounded} (c.b.) if 
 \[
 \sup_{n} \| M_n(T)\|_{\text{operator}} <\infty.\]
The supremum is the \emph{completely bounded norm}. 
We shall also use the related notions  of \emph{completely contractive} and \emph{completely isometric} map.

\begin{example}
\label{ex-col-hilb-sp}
 Every Hilbert space $H$ carries a number of operator space structures. In this paper  we shall consider only the \emph{column Hilbert space} structure, in which $H$ is identified with the concrete operator space $\Bounded(\mathbb C, H)$ of bounded operators from $\mathbb C$ to $H$. Every bounded operator between Hilbert spaces is completely bounded as an operator between column Hilbert spaces, and the completely  bounded norm is the operator norm.  See  \cite[1.2.23]{BlM}. \end{example}

Let $X$, $Y$ and $Z$ be operator spaces.  A bilinear map $\Phi \colon X\times Y\to Z$ gives  rise to bilinear maps 
\[
M_n (\Phi)\colon M_n(X) \times  M_n (Y) \longrightarrow M_n (Z)
\]
through the formula 
\[
M_n (\Phi)\colon \bigl  (  [x_{ij}], [y_{ij}]\bigr  )  \longmapsto \bigl  [\textstyle{\sum_{k=1}^{n} \Phi (x_{ik},y_{kj})} \bigr ]. 
\]
The \emph{Haagerup tensor product} $X\otimes_h Y$  is a completion of the algebraic tensor product over $\mathbb C$ and an operator space,  characterized by the property that every completely contractive $\Phi$ as above   factors uniquely through a completely contractive map $X\otimes_h Y\to Z$.
 See \cite[1.5.4]{BlM}.

\begin{definition}
An \emph{operator algebra} is an operator space $A$ which is also a Banach algebra with a bounded approximate unit, such that the product in $A$ induces a completely contractive map $A\otimes_h A\to A$.
\end{definition}

\begin{definition}
 A \emph{\textup{(}left\textup{)} operator module} over an operator algebra $A$ is an operator space $X$ and a nondegenerate\footnote{\emph{Nondegenerate} means that $A\cdot X$ is dense in $X$.}  left $A$-module for which the module action extends to a completely contractive map $A\otimes_h X\to X$.  One similarly defines right operator modules, and operator bimodules.   
We shall denote by   $\OMod_A$  the category of left operator modules over $A$ and  completely bounded $A$-module maps.
\end{definition}

\subsection{Irreducible Operator Modules}

If $A$ is a $C^*$-algebra, then the category   $\OMod_A$  contains the category $\HMod_A$ of  nondegenerate  Hilbert space representations of $A$ as a full subcategory (each Hilbert space being given its column operator space structure). 
Our first observation  is that if $A$ is of type I, then $\OMod_A$ and $\HMod_A$ have the same irreducible objects (that is, the same modules having no nontrivial closed submodules):

\begin{proposition}\label{same_irreducibles_proposition}
Let $A$ be a type I $C^*$-algebra. Every   irreducible operator $A$-module  is completely isometrically isomorphic in $\OMod_A$ to an irreducible  Hilbert space representation of $A$. 
\end{proposition}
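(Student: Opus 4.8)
The plan is to reduce, via irreducibility, to the case where $A$ is the algebra of compact operators on a single Hilbert space, and then to read off the result from the strong Morita equivalence of $\Compact(H)$ with $\mathbb{C}$. First I would record the observation that plays the role of Schur's lemma. If $J_1,J_2\trianglelefteq A$ are closed ideals with $J_1J_2=0$, then they cannot both act on $X$ with dense range: if $\overline{J_1\cdot X}=X$, then $J_2\cdot(J_1\cdot X)=(J_2J_1)\cdot X=0$ forces $J_2\cdot X=0$ by continuity and density. For any nonzero closed ideal $J$ the closure $\overline{J\cdot X}$ is a closed submodule, so irreducibility gives $\overline{J\cdot X}\in\{0,X\}$; combined with the observation this shows that the annihilator $P=\{a\in A:a\cdot X=0\}$ is a \emph{prime} ideal. (I would also note that, since $A$ has a bounded approximate unit and $X$ is nondegenerate, $e_\lambda\cdot x\to x$, so every nonzero $x$ generates $X$ as a closed submodule.)

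Next I would invoke the structure theory of type I $C^*$-algebras. The quotient $A/P$ is again type I and is now prime; for type I algebras prime closed ideals are primitive (and in any case this is automatic for the separable algebras of interest), so $A/P$ admits a faithful irreducible representation $\sigma$ on a Hilbert space $H_\sigma$. Since $A/P$ is type I, $\Compact(H_\sigma)\subseteq\sigma(A/P)$, and as $\Compact(H_\sigma)$ is an ideal of $B(H_\sigma)$ it is an ideal of $\sigma(A/P)$; faithfulness of $\sigma$ then identifies $J_0:=\sigma^{-1}(\Compact(H_\sigma))$ with $\Compact(H_\sigma)$ as a closed ideal of $A/P$. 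Restricting to the $J_0$-action, faithfulness forces $\overline{J_0\cdot X}=X$ (the alternative $\overline{J_0\cdot X}=0$ is excluded since $J_0\neq 0$), so $X$ is nondegenerate over $\Compact(H_\sigma)$. A short argument then shows $X$ has no nontrivial closed $J_0$-submodules: for a closed $\Compact(H_\sigma)$-submodule $Y$, the subspace $\overline{J_0\cdot Y}$ is in fact an $A/P$-submodule, hence $0$ or $X$, and tracing this through using that every nonzero vector is $A/P$-cyclic yields $Y\in\{0,X\}$. Thus $X$ is an irreducible operator module over $\Compact(H_\sigma)$.

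The heart of the matter, and the step I expect to be the main obstacle, is the classification of irreducible operator modules over $\Compact(H_\sigma)$: I claim the only one, up to complete isometry, is the column Hilbert space $H_\sigma$ of Example~\ref{ex-col-hilb-sp}. The clean route is strong Morita equivalence: $\Compact(H_\sigma)$ and $\mathbb{C}$ are strongly Morita equivalent via the (column) imprimitivity bimodule $H_\sigma$, and this induces a completely isometric equivalence between $\OMod_{\Compact(H_\sigma)}$ and the category $\OMod_{\mathbb{C}}$ of operator spaces, implemented by Haagerup tensoring with $H_\sigma$, as in \cite{BlM}. A category equivalence carries irreducibles to irreducibles, the only irreducible operator space is one-dimensional (hence completely isometric to $\mathbb{C}$), and tracing $\mathbb{C}$ back through the equivalence returns $H_\sigma$ with its column structure. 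Concretely one can instead fix a rank-one projection $p\in\Compact(H_\sigma)$ and check that $X$ is completely isometric to $H_\sigma\otimes_h pX$ with $pX$ an irreducible operator module over $p\,\Compact(H_\sigma)\,p=\mathbb{C}$, hence one-dimensional. Making either argument \emph{completely isometric}, rather than merely boundedly isomorphic, is exactly where the operator-space input from \cite{BlM} is essential, and is the technical crux.

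Finally I would transport the representation back to $A$. Because $\Compact(H_\sigma)$ acts nondegenerately, the $A/P$-action on $X\cong H_\sigma$ is the unique extension through the multiplier algebra $M(\Compact(H_\sigma))=B(H_\sigma)$, which is precisely $\sigma$; composing with the quotient $A\onto A/P$ exhibits $X$, completely isometrically, as the column Hilbert space representation $(H_\sigma,\sigma)$ of $A$. Since $\sigma$ is irreducible, this is an irreducible Hilbert space representation of $A$, completing the proof.
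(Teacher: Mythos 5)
Your proposal is correct, but it takes a genuinely different route from the paper's. The paper argues concretely: it invokes the representation theorem for operator modules \cite[Theorem 3.3.1]{BlM} to realize $X$ inside $\Bounded(K,H)$ for a nondegenerate Hilbert space representation $H$ of $A$, proves via the Dauns--Hofmann theorem and a composition series with Hausdorff-spectrum liminal quotients that $H$ is a multiple $M\otimes L$ of a single irreducible representation $L$, and then extracts the isomorphism $X\cong L$ by a Schur's lemma/Kadison transitivity argument applied to the maps $T\mapsto m^*\cdot T\cdot k$. You instead argue module-theoretically: the annihilator $P$ of $X$ is prime; a prime type I $C^*$-algebra is primitive and contains an elementary ideal $J_0\cong \Compact(H_\sigma)$ (this assertion, which you cite as structure theory, is where type I-ness enters; it follows, for instance, from the existence of nonzero continuous-trace ideals in type I algebras \cite[Section 6.2]{Pedersen} together with the fact that an irreducible Hausdorff spectrum is a single point); Cohen factorization reduces irreducibility over $A/P$ to irreducibility over $J_0$; and Blecher's Morita theory classifies irreducible operator modules over the compacts as the column Hilbert space. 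Your route buys modularity: it isolates exactly where the type I hypothesis is used, and it replaces the concrete embedding and Kadison transitivity with the Morita machinery of \cite[Chapter 8]{BlM} --- essentially the same Lemma 8.2.15 that the paper uses later for its adjunction theorem --- which also makes transparent why the answer must be a column Hilbert space. One caution: your statement that ``a category equivalence carries irreducibles to irreducibles'' is not purely formal here, because irreducibility means no nontrivial \emph{closed} submodules, and completely bounded injections need not have closed range, so irreducibility is not an obviously categorical property. For the specific functor $V\mapsto H_\sigma\otimes_h V$ the claim is nevertheless correct, since the (unbalanced) Haagerup tensor product preserves complete isometries and completeness, hence carries closed subspaces of $V$ to closed submodules of $H_\sigma\otimes_h V$; your concrete alternative with the rank-one projection $p$ fills the same hole. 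With that caveat addressed, your argument is complete.
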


\begin{proof}
Let $X$ be an  irreducible operator $A$-module.  By \cite[Theorem 3.3.1]{BlM} there is a nondegenerate representation of $A$ on a Hilbert space $H$,  a second Hilbert space $K$, and a completely isometric isomorphism from $X$ to a closed $A$-submodule of the space $\Bounded(K,H)$ of bounded operators from $K$ to $H$.  We shall realize $X$ as a subspace of  $\Bounded(K,H)$ in this way, and we may assume that $X\cdot K$ is dense in $H$.  

We are going to argue that the representation of $A$ on $H$ is a multiple of a single irreducible representation of $A$.  To begin, the representation of $A$ on $H$ extends to the multiplier algebra $M(A)$, and the restriction to the center $Z(M(A))$  is a multiple of a single irreducible representation of the center.  For otherwise there would exist elements $z_1,z_2\in Z(M(A))$ with $z_1z_2=0$ yet 
\[
z_1H\neq 0   \quad \text{and} \quad z_2H\neq 0,
\] 
so that 
\[
z_1X\neq 0   \quad \text{and} \quad z_2X\neq 0 .
\] 
The subspace 
\[
  \overline{z_1X}   \subseteq X
\]
would then be a nontrivial submodule of the supposedly irreducible module $X$.

Assume now that $A$ is liminal (that is, $A$ acts as compact operators in each irreducible representation) and that furthermore the spectrum $\widehat A$ is a Hausdorff topological space.
The  Dauns-Hofmann theorem (see \cite{DaunsHofmann} or \cite[Section 4.4]{Pedersen}) identifies $Z(M(A))$ with the algebra of bounded, continuous, com\-plex-valued functions on  $\widehat A$. The identification is as follows:
\begin{enumerate}[\rm (a)]
\item  If $I\subseteq Z(M(A))$ is the maximal ideal corresponding to evaluation at $[\pi]\in \widehat A$, then $\overline{IA}$ is the kernel of $\pi$. 
\item  In contrast, if $I\subseteq Z(M(A))$ is the maximal ideal corresponding to evaluation at a point at infinity, then $\overline{I A} = A$.   
\end{enumerate}
In our present situation, we see that the action of $Z(M(A))$ on $H$ must factor through the quotient by a maximal ideal corresponding to a point of $\widehat A$, as in item (a), since the action of $A$ on $H$ is certainly nonzero. Therefore   the action of $A$ must factor through a quotient  $A/ \overline{IA}$. Since the quotient is isomorphic to the compact operators, the action of $A$ is a multiple of a single irreducible representation, as required.  

Next assume that $A$ is a general liminal $C^*$-algebra. Let $\{J_\alpha\}$ be a composition series for $A$  for which each quotient $J_{\alpha+1}/J_\alpha$ has Hausdorff spectrum. Take the least $\alpha$ for which $J_\alpha X\neq 0$; by irreducibility we must then have $J_\alpha X=X$. Consider $X$ as an irreducible operator module over $B=J_\alpha/J_{\alpha-1}$, where as usual $J_{\alpha-1}$ denotes the closure of the union of all ideals in the composition series smaller than $J_\alpha$.  The argument above shows that $H$ is a multiple of a single irreducible representation of $B$, and hence is a multiple of a single irreducible representation of $A$.

Finally, if  $A$ is a  general type I $C^*$-algebra,  we can  apply the above argument to a composition series for $A$ with liminal quotients to show that $H$ is a multiple of a single irreducible representation of $A$ in this case too.

We have now shown in general that  $H$ is a multiple of a single irreducible representation of $A$, say
\[
H \cong  M\otimes L   
\]
where $A$ acts trivially on $M$ and irreducibly on $L$. We shall now show that there is a bounded operator $S:K\to M$ such that $X$ consists precisely of all operators in $\Bounded(K,M\otimes L)$ of the form
\begin{equation}\label{S_equation}  k\longmapsto S(k)\otimes \ell,\end{equation}
as $\ell$ ranges over $L$. The map sending the operator \eqref{S_equation} to $\|S\|\cdot\ell$ will then be a completely isometric $A$-linear isomorphism from $X$ to $L$.

For each $k\in K$ and $m\in M$ consider the completely bounded, $A$-linear map from $X$ to $L$ defined by
\begin{equation}\label{eq-iso-phi}
X\ni T \longmapsto m^* \cdot T \cdot k \in L,
\end{equation}
where $m^*:M\otimes L\to L$ is the operator $m'\otimes \ell \mapsto \langle m,m'\rangle \ell$. Fix $k_0$ and $m_0$ for which the operator \eqref{eq-iso-phi} is nonzero. Then this operator is invertible: its kernel is a proper closed submodule of the irreducible module $X$, while its image is a nonzero $A$-invariant subspace of $L$, which must equal $L$ by Kadison's transitivity theorem \cite{Kadison} (or by a direct argument in the present rather elementary type I situation).

Applying Schur's lemma to the irreducible representation $L$, we find that there are scalars $c_{k,m}$ such that
\[
 m^* \cdot T \cdot k=c_{k,m} \cdot m_0^* \cdot T \cdot k_0
\]
for all $k\in K$ and $m\in M$. Taking $S\in \Bounded(K,M)$ to be the operator defined by $\langle m,S(k)\rangle = c_{k,m}$, we have
\[ T(k) = S(k)\otimes (m_0^*\cdot T\cdot k_0)\]
for all $T\in X$ and all $k\in K$.
\end{proof}

\begin{remark} We do not know if the assumption in the previous proposition that $A$ be of  type I is actually necessary.
\end{remark}

\subsection{Functors Between Operator Module Categories}

If $X$ is a right operator  $B$-module, and if $Y$ is a left  operator $B$-module, then we can of course form the algebraic tensor product of $X$ and $Y$ over $B$.  The \emph{balanced Haagerup tensor product} $X\otimes_{hB}Y$ is a completion of the algebraic tensor product and an operator space, characterized  by the fact that any completely contractive bilinear map 
\[
 \Phi \colon X\times Y\longrightarrow  Z 
\]
with $\Phi (xb,y) = \Phi (x,by)$ extends to a completely contractive map from $X\otimes_{hB} Y$ to $Z$. The Haagerup tensor product is associative, and functorial with respect to c.b.\ bimodule maps. If $X$ and  $Y$ carry left operator $A$-module and right operator $C$-module structures, respectively, then $X\otimes_{hB} Y$ is an operator $A$-$C$-bimodule. See \cite[Section 3.4]{BlM}.

Now let $A$ and $B$ be operator algebras, and let $E$ be an operator $A$-$B$-bimodule.  We obtain a functor 
\[
\OMod_B \longrightarrow \OMod_A
\]
from the Haagerup tensor product operation:
\[
X \longmapsto E\otimes _{hB} X.
\]
When needed, we shall give the functor the same name---$E$---as the bimodule. 
Note that composition of functors corresponds, up to natural isomorphism, to Haagerup tensor product of bimodules.

If $X$ is an operator $A$-module, then the module structure    induces a completely isometric isomorphism 
\begin{equation}
\label{eq-om-smooth1}
A\otimes_{hA} X\stackrel \cong \longrightarrow X  .
\end{equation}
Similarly, we obtain a completely isometric isomorphism  
\begin{equation}
\label{eq-om-smooth2}
 X\otimes_{hB} B \stackrel \cong \longrightarrow X 
\end{equation}
in the case of a right operator $B$-module structure.  See \cite[Lemma 3.4.6]{BlM}. So tensoring with $A$ or $B$, viewed as   operator bimodules over themselves, gives the identity functor up to natural isomorphism.

\subsection{Adjunctions} 
Our aim is to study adjunction relations, in  the usual sense of category theory, between the functors introduced above. 
So let $A$ and $B$ be operator algebras,  and let $E$ be an operator $A$-$B$-bimodule.  In addition,  let $F$ be an operator $B$-$A$-bimodule.  Following standard terminology, we say that $F$ is \emph{left adjoint} to $E$, and that $E$ is \emph{right adjoint} to $F$, if  there is a natural isomorphism 
\begin{equation}
\label{eq-adjunction-def}
\CB_B(F\otimes_{hA} X, Y) \stackrel \cong \longrightarrow \CB_A(X, E\otimes_{hB} Y),
\end{equation}
as $X$ ranges over  all operator $A$-modules and $Y$ ranges over  all operator $B$-modules.  

The bijection is required to be simply a bijection of sets, but in fact it is automatically a uniformly (over $X$ and $Y$) completely bounded  natural isomorphism of operator spaces, as the following simple lemmas make  clear (the lemmas simply place the unit/counit characterization of adjoint functors within the operator module context: compare \cite[Chapter IV]{MacLane}).

\begin{lemma}  Associated to each natural isomorphism \eqref{eq-adjunction-def} 
there is a   completely bounded $A$-bimodule map 
\[
\eta \colon A \longrightarrow  E\otimes_{hB} F
\]
 \textup{(}the \emph{unit} of the adjunction\textup{)}  with the property that the composition 
 \[
 \CB_B(F\otimes_{hA} X, Y) \stackrel{ \text{\eqref{eq-adjunction-def}} } \longrightarrow \CB_A(X, E\otimes_{hB} Y) 
\stackrel \cong\longrightarrow  \CB_A(A \otimes_{hA}X , E\otimes_{hB} Y)
 \]
sends a morphism $T\colon F\otimes_{hA} X\to  Y$ to the composition
 \[
 A\otimes _{hA} X \stackrel{\eta\otimes \mathrm {id}}\longrightarrow E\otimes_{hB} F \otimes _{hA} X  \stackrel{\mathrm{id}\otimes T } \longrightarrow E \otimes _{hB} Y .
 \]
 \end{lemma}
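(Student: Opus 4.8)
The plan is to construct $\eta$ by the familiar recipe for the unit of an adjunction---applying the isomorphism \eqref{eq-adjunction-def} to an identity morphism---and then to deduce the stated formula by two applications of naturality, one in each variable. Throughout I write $\Phi_{X,Y}$ for the bijection \eqref{eq-adjunction-def}, and I let $\gamma\colon F\otimes_{hA}A\stackrel{\cong}{\longrightarrow}F$ be the canonical isomorphism of \eqref{eq-om-smooth2} applied to the right $A$-module $F$. I would \emph{define}
\[
\eta \coloneq \Phi_{A,F}(\gamma)\in\CB_A(A,E\otimes_{hB}F).
\]
By construction $\eta$ is a left $A$-module map, and its complete boundedness is automatic since it lies in a $\CB$-space.

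The first substantive step is to show that $\eta$ is also a \emph{right} $A$-module map, so that the tensor product $\eta\otimes\mathrm{id}_X$ in the statement is even defined. Fix $a\in A$ and consider the right-multiplication operators $\rho_a\colon A\to A$, $x\mapsto xa$, and $r_a\colon F\to F$, $\xi\mapsto\xi a$; the former is a completely bounded left $A$-module map and the latter a completely bounded left $B$-module map (the latter because $F$ is a $B$-$A$-bimodule). A direct check gives the intertwining identity $\gamma\circ(\mathrm{id}_F\otimes\rho_a)=r_a\circ\gamma$ on $F\otimes_{hA}A$. Naturality of $\Phi$ in the variable $X$, applied to $\rho_a$, then yields $\eta\circ\rho_a=\Phi_{A,F}\bigl(\gamma\circ(\mathrm{id}_F\otimes\rho_a)\bigr)=\Phi_{A,F}(r_a\circ\gamma)$, while naturality in $Y$, applied to $r_a$, gives $\Phi_{A,F}(r_a\circ\gamma)=(\mathrm{id}_E\otimes r_a)\circ\eta$. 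Comparing the two shows $\eta\circ\rho_a=(\mathrm{id}_E\otimes r_a)\circ\eta$, which is precisely the statement that $\eta$ commutes with the right $A$-actions. Hence $\eta$ is an $A$-bimodule map.

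With $\eta$ in hand, I would prove the formula by reducing the general $\Phi_{X,Y}(T)$ back to the definition of $\eta$. For $x\in X$ let $\lambda_x\colon A\to X$ be the completely bounded left $A$-module map $a\mapsto ax$; using the approximate unit of $A$ one checks that the composite $(\mathrm{id}_F\otimes\lambda_x)\circ\gamma^{-1}\colon F\to F\otimes_{hA}X$ is simply $f\mapsto f\otimes x$, and I set $\tilde T_x\coloneq T\circ\bigl[f\mapsto f\otimes x\bigr]\colon F\to Y$, a completely bounded left $B$-module map, so that $T\circ(\mathrm{id}_F\otimes\lambda_x)=\tilde T_x\circ\gamma$. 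Naturality in $X$ applied to $\lambda_x$ gives $\Phi_{X,Y}(T)\circ\lambda_x=\Phi_{A,Y}\bigl(T\circ(\mathrm{id}_F\otimes\lambda_x)\bigr)$, and then naturality in $Y$ applied to $\tilde T_x$ turns the right-hand side into $(\mathrm{id}_E\otimes\tilde T_x)\circ\Phi_{A,F}(\gamma)=(\mathrm{id}_E\otimes\tilde T_x)\circ\eta$. Evaluating at $a\in A$ and using $\lambda_x(a)=ax$, this reads
\[
\Phi_{X,Y}(T)(ax)=(\mathrm{id}_E\otimes\tilde T_x)\bigl(\eta(a)\bigr).
\]
Since $\tilde T_x(f)=T(f\otimes x)$, continuity of $\mathrm{id}_E\otimes(-)$ on the Haagerup tensor product identifies the right-hand side with $(\mathrm{id}_E\otimes T)\bigl(\eta(a)\otimes x\bigr)$, which is exactly the value of $(\mathrm{id}_E\otimes T)\circ(\eta\otimes\mathrm{id}_X)$ on the elementary tensor $a\otimes x\in A\otimes_{hA}X$. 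Under the identification \eqref{eq-om-smooth1}, $a\otimes x\mapsto ax$, the two completely bounded maps $A\otimes_{hA}X\to E\otimes_{hB}Y$ in question therefore agree on the dense span of elementary tensors, hence everywhere; this is the asserted formula.

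I expect the main obstacle to be the bimodule property of $\eta$: it is what makes $\eta\otimes\mathrm{id}_X$ meaningful, and it does not follow from any single formal manipulation but from the interplay of naturality in the two separate variables together with the intertwining identity for $\gamma$. The remaining verifications---the description of $(\mathrm{id}_F\otimes\lambda_x)\circ\gamma^{-1}$ and the passage to elementary tensors---are routine once the approximate unit and the continuity of the Haagerup tensor product are invoked.
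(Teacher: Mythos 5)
Your proof is correct and takes essentially the same route as the paper: you define $\eta$ as the image under \eqref{eq-adjunction-def} (at $X=A$, $Y=F$) of the canonical action map $F\otimes_{hA}A\to F$, deduce the stated formula from naturality in each of the two variables, and close with a density argument resting on nondegeneracy. The only differences are organizational---the paper first shows that \eqref{eq-adjunction-def} sends $\mathrm{id}_{F\otimes_{hA}X}$ to $\eta\otimes\mathrm{id}_X$ and then applies naturality in $Y$, whereas you evaluate $\Phi_{X,Y}(T)$ on elements $ax$ directly via the maps $\lambda_x$ and $\tilde T_x$---and your explicit verification that $\eta$ is a \emph{right} $A$-module map spells out a point the paper only asserts follows from naturality.
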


 \begin{proof}
 The definition of $\eta$ is very simple (and standard).   Take $X = A$ and $Y=F$ in the isomorphism \eqref{eq-adjunction-def}  to obtain 
 \[
 \CB_B(F\otimes_{hA} A, F) \stackrel \cong \longrightarrow \CB_A(A, E\otimes_{hB} F).
 \]
 Then define $\eta$ to be the image on the right hand side of the canonical element $F\otimes_{hA} A\to F$ on the left given by the module action.  The map $\eta$ defined in this way is \emph{a priori} just a left $A$-module map, but the naturality of \eqref{eq-adjunction-def} implies it is a right $A$-module map too.
 
 The proof that 
 the isomorphism \eqref{eq-adjunction-def} is given by the formula in the lemma is a straightforward consequence of naturality of the isomorphism once again, together with the following claim: the isomorphism 
  \begin{multline*}
 \CB_B(F\otimes_{hA} X, F\otimes _{hA}X) \stackrel {\text{\eqref{eq-adjunction-def}}} \longrightarrow \CB_A(X, E\otimes_{hB} F\otimes_{hA} X) \\
 \stackrel \cong \longrightarrow \CB_A(A\otimes _{hA} X, E\otimes_{hB} F\otimes_{hA} X)
 \end{multline*}
takes the identity operator on $F \otimes_{hA}X$ to $\eta\otimes \mathrm{id}_X$.  As for the claim, denote by  
\[
S\colon X \longrightarrow  E\otimes_{hB} F\otimes_{hA} X
\]
the image of the identity operator on $F \otimes_{hA}X$  under \eqref{eq-adjunction-def}.  From the commuting  diagram
\[
\xymatrix{
 \CB_B(F\otimes_{hA} X, F\otimes _{hA}X)\ar[d] \ar[r] 
 &  \CB_A(X, E\otimes_{hB} F\otimes_{hA} X) \ar[d] \\
  \CB_B(F\otimes_{hA} A, F\otimes _{hA}X) \ar[r] &  \CB_A(A, E\otimes_{hB} F\otimes_{hA} X)\\
    \CB_B(F\otimes_{hA} A, F\otimes _{hA}A) \ar[u] \ar[r] &
      \CB_A(A, E\otimes_{hB} F\otimes_{hA} A)\ar[u]
}
\]
 in which both squares are associated, by the naturality of \eqref{eq-adjunction-def}, to  a c.b.\  $A$-module map from $A$ into $X$, we see that $S$ is equal to $\eta\otimes \mathrm{id}_X$ on the image of any $A\to X$.  But these images are dense in $X$, so the claim is proved.
 \end{proof}

Similarly:

\begin{lemma}  Associated to each natural isomorphism \eqref{eq-adjunction-def} 
there is   a completely bounded $B$-bimodule map 
 \[
 \epsilon \colon F\otimes_{hA} E \longrightarrow  B
 \]
 \textup{(}the \emph{counit} of the adjunction\textup{)}  with the property that the inverse of the composition 
  \[
 \CB_A(X, E\otimes_{hB} Y)
  \stackrel{ \text{\eqref{eq-adjunction-def}} } \longleftarrow
  \CB_B(F\otimes_{hA} X, Y)
\stackrel \cong\longleftarrow  \CB_B(F \otimes_{hA}X , B\otimes_{hB} Y)
 \]
sends a morphism $S\colon   X\to  E\otimes_{hB} Y$ to the composition
 \[
F \otimes _{hA}  X\stackrel{\mathrm{id}\otimes S }\longrightarrow F\otimes_{hA} E \otimes _{hA} Y  \stackrel{\varepsilon \otimes \mathrm{id} } \longrightarrow     B\otimes _{hB} Y .
 \eqqed
 \]
 \end{lemma}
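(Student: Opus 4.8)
The plan is to run the construction dually to the previous lemma, interchanging the roles of $E$ and $F$, of $A$ and $B$, and of the unit and the counit.

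First I would \emph{define} $\epsilon$. Setting $X=E$ and $Y=B$ in \eqref{eq-adjunction-def} gives a bijection
\[
\CB_B(F\otimes_{hA} E, B) \stackrel\cong\longrightarrow \CB_A(E, E\otimes_{hB} B),
\]
and on the right-hand side sits the canonical element $E\stackrel\cong\longrightarrow E\otimes_{hB} B$ furnished by the inverse of the module isomorphism \eqref{eq-om-smooth2}. I define $\epsilon$ to be its preimage under this bijection. Being an element of a $\CB$ hom-set it is automatically completely bounded, and a priori it is a morphism in $\OMod_B$, i.e.\ a left $B$-module map. That it is also right $B$-linear follows, exactly as for $\eta$ in the previous lemma, from naturality of \eqref{eq-adjunction-def} with respect to the right-multiplication operators $E\to E$ by elements of $B$ --- these are $A$-module maps, since the left $A$- and right $B$-actions on $E$ commute --- so that $\epsilon$ intertwines the two right $B$-actions and is therefore a $B$-bimodule map.

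Next I would \emph{verify the formula}. Just as before, naturality of \eqref{eq-adjunction-def} in the variable $X$ reduces the general case to the single claim that the inverse isomorphism
\[
\CB_A(E\otimes_{hB} Y,\, E\otimes_{hB} Y) \longrightarrow \CB_B(F\otimes_{hA} E\otimes_{hB} Y,\, Y)
\]
carries $\mathrm{id}_{E\otimes_{hB} Y}$ to $\epsilon\otimes\mathrm{id}_Y$, after the identification $B\otimes_{hB} Y\cong Y$ supplied by \eqref{eq-om-smooth1}. I would prove this claim with the mirror image of the commuting diagram in the previous lemma, now placing $Y$ in the varying slot and taking the vertical maps to be induced by c.b.\ $B$-module maps $B\to Y$. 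The diagram shows that the image $R$ of the identity agrees with $\epsilon\otimes\mathrm{id}_Y$ after restriction along every such $B\to Y$; since $Y$ is nondegenerate these images are dense in $Y$, which forces $R=\epsilon\otimes\mathrm{id}_Y$.

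The main point to watch is the one already flagged in the previous lemma: \eqref{eq-adjunction-def} is only assumed to be a natural bijection of \emph{sets}, so linearity, the bimodule structure of $\epsilon$, and the passage to the formula must all be squeezed out of set-level naturality together with the canonical identifications \eqref{eq-om-smooth1} and \eqref{eq-om-smooth2}. The one genuinely new ingredient is that the density step now rests on nondegeneracy of $Y$ in place of $X$; checking that the dualized diagram closes up under this replacement is where I would concentrate the care.
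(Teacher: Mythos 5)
Your proof is correct and coincides with the paper's intended argument: the paper offers no separate proof of this lemma (it is prefaced only by ``Similarly:''), the intended proof being precisely the formal dual of the unit lemma, which is what you have written out --- defining $\epsilon$ as the preimage under \eqref{eq-adjunction-def} (with $X=E$, $Y=B$) of the canonical isomorphism $E\cong E\otimes_{hB}B$, reducing via naturality to the claim that $\mathrm{id}_{E\otimes_{hB}Y}$ is carried to $\epsilon\otimes\mathrm{id}_Y$, and closing the density step with nondegeneracy of $Y$ in place of $X$. One small point you could make explicit (though the paper is equally terse about it for $\eta$): the right $B$-linearity of $\epsilon$ uses naturality in \emph{both} variables --- with respect to right multiplication $E\to E$ in the contravariant slot and right multiplication $B\to B$ in the covariant slot --- together with the identity $\iota\circ R_b=(\mathrm{id}_E\otimes R_b)\circ\iota$ for the canonical map $\iota\colon E\to E\otimes_{hB}B$.
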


The unit and counit of an adjunction are linked by standard identities, and conversely any appropriate pair of linked bimodule maps gives an adjunction: 

\begin{lemma}  
\label{lem-linking}
Given an adjunction \eqref{eq-adjunction-def} and maps 
\[
 \eta \colon A \longrightarrow  E\otimes_{hB} F \qquad \text{and}\qquad 
 \epsilon \colon F\otimes_{hA} E \longrightarrow  B
 \] 
 as in the previous lemmas, the two compositions
 \[ 
 A\otimes_{hA} E \xrightarrow{\eta\otimes \mathrm{id}} E\otimes_{hB} F\otimes_{hA} E \xrightarrow{\mathrm{id}\otimes\epsilon} E\otimes_{hB} B   \longrightarrow E 
 \]
 and
 \[ 
 F\otimes_{hA} A \xrightarrow{\mathrm{id}\otimes \eta} F\otimes_{hA} E\otimes_{hB} F \xrightarrow{\epsilon\otimes \mathrm{id}} B\otimes_{hB} F  \longrightarrow F 
 \]
 are the the canonical isomorphisms induced from the left and right $A$- and $B$-module actions on $E$ and $F$, respectively.   Conversely this data determines an adjunction isomorphism. \qed
 \end{lemma}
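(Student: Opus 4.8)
The plan is to recognise the statement as the operator-module incarnation of the classical unit--counit (triangle identity) characterisation of an adjunction, and to reduce both assertions to the explicit formulas for \eqref{eq-adjunction-def} and its inverse recorded in the two preceding lemmas. Throughout I write $\Phi_{X,Y}\colon \CB_B(F\otimes_{hA}X,Y)\to\CB_A(X,E\otimes_{hB}Y)$ for the bijection \eqref{eq-adjunction-def}, and I recall from those lemmas that $\eta$ is the image under $\Phi_{A,F}$ of the canonical action map $F\otimes_{hA}A\to F$, while $\epsilon$ is the image under $\Phi_{E,B}^{-1}$ of the canonical map $E\to E\otimes_{hB}B$ of \eqref{eq-om-smooth2}.

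For the forward direction (the triangle identities) the only point is that $\Phi$ and $\Phi^{-1}$ are mutually inverse. First I would compute $\Phi_{A,F}^{-1}(\eta)$ in two ways. On the one hand, since $\eta=\Phi_{A,F}(\text{action})$, this is just the canonical isomorphism $F\otimes_{hA}A\to F$. On the other hand, the formula of the second lemma, applied with $X=A$, $Y=F$ and $S=\eta$, identifies $\Phi_{A,F}^{-1}(\eta)$, up to the isomorphism \eqref{eq-om-smooth2}, with the composite
\[
F\otimes_{hA}A\xrightarrow{\id\otimes\eta}F\otimes_{hA}E\otimes_{hB}F\xrightarrow{\epsilon\otimes\id}B\otimes_{hB}F .
\]
Comparing the two expressions yields the second displayed triangle identity. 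The first is obtained symmetrically by computing $\Phi_{E,B}(\epsilon)$: it equals the canonical map $E\to E\otimes_{hB}B$ because $\epsilon=\Phi_{E,B}^{-1}(\text{action})$, while the formula of the first lemma, applied with $X=E$, $Y=B$ and $T=\epsilon$, identifies it with the first displayed composite. Hence both triangle identities hold.

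For the converse I would take the two given bimodule maps $\eta$ and $\epsilon$ and define candidate natural transformations by the same formulas: send $T\colon F\otimes_{hA}X\to Y$ to $(\id_E\otimes T)\circ(\eta\otimes\id_X)\colon X\to E\otimes_{hB}Y$, and send $S\colon X\to E\otimes_{hB}Y$ to $(\epsilon\otimes\id_Y)\circ(\id_F\otimes S)\colon F\otimes_{hA}X\to Y$, in each case using \eqref{eq-om-smooth1} and \eqref{eq-om-smooth2} to absorb the trivial tensor factors. Because $\eta$ and $\epsilon$ are completely bounded bimodule maps and $\otimes_h$ is functorial for such maps, these assignments carry completely bounded module maps to completely bounded module maps, and their naturality in $X$ and $Y$ is immediate from the functoriality of the Haagerup tensor product. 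It then remains to check that the two assignments are mutually inverse. Substituting one into the other and using the bifunctoriality (``middle four interchange'') of $\otimes_{hB}$ to slide $T$ (respectively $S$) past $\epsilon$ (respectively $\eta$), the composite collapses to one of the two triangle identities tensored with $\id_X$ (respectively $\id_Y$); by hypothesis that tensored composite is the canonical action isomorphism, so the round trip returns $T$ (respectively $S$), and the adjunction isomorphism is established.

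The conceptual content here is entirely classical, so I expect no genuine obstacle; the only care needed is the bookkeeping with the associativity and bifunctoriality of the Haagerup tensor product and with the canonical module isomorphisms \eqref{eq-om-smooth1}--\eqref{eq-om-smooth2}, together with the routine check that every map produced is completely bounded and module-linear. The most error-prone step is the middle-four-interchange rearrangement in the converse, where one must keep straight which tensor factors are balanced over $A$ and which over $B$; once that is organised correctly, the triangle identities do all the work.
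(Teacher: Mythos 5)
Your proof is correct and takes essentially the same route the paper intends: the paper gives no written argument for this lemma, deferring to the classical unit--counit characterisation in \cite[Chapter IV]{MacLane}, and your argument is exactly that classical one, with the triangle identities in the forward direction correctly extracted from the formulas of the two preceding lemmas and the converse handled by the standard mutual-inverse check. The operator-space bookkeeping you flag (complete boundedness, naturality, and the interchange step for the Haagerup tensor product) is precisely all that needs to be added to the category-theoretic argument, so nothing is missing.
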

 
  For the proof, compare for example \cite[Chapter IV]{MacLane} once again.

\begin{example}\label{operator_Frobenius_example}
 Let $A$ be a closed subalgebra of $B$ satisfying 
 \[
\overline{AB}= B=\overline{BA}.
 \]
  Let $E=B$, considered as an operator $A$-$B$-bimodule. The corresponding tensor product functor 
  \[
 \OMod_B \longrightarrow \OMod_A 
  \]
simply associates to an operator $B$-module   its restriction to an operator $A$-module. Then define  $F=B$, considered as an operator $B$-$A$-bimodule.  The associated tensor product  functor $X\mapsto B\otimes_A X$ is left adjoint to $E$. The maps 
\[
 \eta \colon A\longrightarrow  E\otimes_{hB} F  \qquad \text{and}\qquad 
 \epsilon \colon F\otimes_{hA} E\longrightarrow  B
 \]
 given by the formulas $\eta(a_1a_2)=a_1\otimes a_2$ and $\epsilon(b_1\otimes b_2) = b_1 b_2$ are the unit and counit of an adjunction. 
\end{example}

\subsection{An Adjunction Theorem from Hilbert C*-Modules}

Hilbert $C^*$-modules provide a very simple set of instances of the ideas from the previous section.   To see this, we need to first recall some elegant observations, due to Blecher \cite{Blecher-cstar}, that link operator spaces to Hilbert $C^*$-modules.  See also \cite[Chapter 8]{BlM}, as well as \cite{Lance}   for an introduction to Hilbert $C^*$-modules.

Let $E$ be a right Hilbert $C^*$-module over a $C^*$-algebra $B$.  The matrix space $M_n(E)$ is naturally a Hilbert $C^*$-module over $M_n(B)$, with inner product 
\[
\bigl \langle [e_{ij}], [f_{ij}]\bigr \rangle = \bigl [\,{\textstyle \sum _{k} \langle e_{ki},f_{kj}\rangle }\,\bigr ] ,
\]
and in this way we give $E$ the structure of an operator space and a right operator $B$-module.

 A bounded, adjointable operator   between Hilbert $C^*$-$B$-modules is automatically completely bounded with the same norm (in fact this is true for any bounded $B$-module map, whether  or not it is adjointable).  
 
 We are especially interested in
the situation where a Hilbert $C^*$-$B$-module $E$ is equipped with a left action of a second $C^*$-algebra $A$ by bounded and adjointable operators. One sometimes  calls $E$ a \emph{$C^*$-correspondence} from $A$ to $B$, and every such correspondence is an operator $A$-$B$-bimodule.

Now if $E$ is \emph{any} operator space, then its \emph{adjoint} $E^*$ is the complex conjugate vector space, equipped with the norms 
\[
\bigl \| [e_{ij} ] \bigr \|_{M_n (E^*)} = \bigl \| [ e_{ji} ]\bigr \|_{M_n(E)},
\]
which endow $E^*$ with the structure of an operator space.  See \cite[Section 1.2.25]{BlM}. If $E$ is an operator $A$-$B$-bimodule, where $A$ and $B$ are $C^*$-algebras, then $E^*$ is an operator $B$-$A$-bimodule via the formula
\[ 
 b\cdot   e^*\cdot a  =  ( a^* \cdot e\cdot  b^*)^*.
\]
 
Let us  apply this construction to the situation in which $E$ is a Hilbert $C^*$-$B$-module, as follows.  Denote by $\Compact_B(E)$ the $C^*$-algebra of $B$-compact operators on $E$, that is, the closed linear span of all bounded adjointable operators on $E$ of the form
\[
e_1^{\vphantom{*}}\otimes e_2^* \colon e \longmapsto  e_1 \langle e_2 , e \rangle .
\]
The tensor product notation is particularly apt in view of the following very elegant and   useful calculation of Blecher. 
\begin{lemma}
\label{lem-cb-compact}
 \cite[Corollary 8.2.15]{BlM}.
The above formula defines a completely contractive map
\begin{equation*}
\kappa \colon E  \otimes_{hB}  E^*\longrightarrow   \Compact_B(E) ,
\end{equation*}
and this map  is in fact a completely isometric isomorphism. \qed
\end{lemma}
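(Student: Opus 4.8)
The plan is to realize $\kappa$ as a corner of the multiplication map of a single $C^*$-algebra, and then to match the Haagerup norm with the operator norm by an explicit balancing computation. First I would introduce the linking algebra $\mathcal{L}=\Compact_B(E\oplus B)$ together with the projections $p,q\in M(\mathcal{L})$ onto the two summands. Its four corners recover our data completely isometrically, namely $p\mathcal{L}p\cong\Compact_B(E)$, $q\mathcal{L}q\cong B$, $p\mathcal{L}q\cong E$ and $q\mathcal{L}p\cong E^{*}$, with the operator space structures on $E$ and $E^{*}$ agreeing with those defined above, and with the internal products $E\cdot E^{*}\subseteq\Compact_B(E)$ and $E^{*}\cdot E\subseteq B$ given by $e_1\cdot e_2^{*}=e_1\otimes e_2^{*}$ and $e_1^{*}\cdot e_2=\langle e_1,e_2\rangle$ respectively.

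For the complete contractivity of $\kappa$, I would simply note that the multiplication $\mathcal{L}\otimes_h\mathcal{L}\to\mathcal{L}$ is completely contractive (this is the defining property of $\mathcal{L}$ as an operator algebra). Restricted along the completely isometric inclusion $E\otimes_h E^{*}\hookrightarrow\mathcal{L}\otimes_h\mathcal{L}$, and using $eb\cdot f^{*}=e\cdot bf^{*}$ in $\mathcal{L}$, the resulting map is $B$-balanced, so it factors through a complete contraction $E\otimes_{hB}E^{*}\to\mathcal{L}$ with image $\overline{E\cdot E^{*}}=\Compact_B(E)$. This factored map is exactly $\kappa$, so $\|\kappa\|_{\cb}\le 1$.

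The heart of the matter is the reverse inequality $\|u\|_h\le\|\kappa(u)\|$, which (by continuity of both sides and density) I need only establish for a finite tensor $u=\mathbf{e}\odot\mathbf{f}^{*}$, where $\mathbf{e}=[e_1,\dots,e_m]$ is a row and $\mathbf{f}^{*}=[f_1^{*};\dots;f_m^{*}]$ a column. Here $\kappa(u)=L_{\mathbf{e}}L_{\mathbf{f}}^{*}$, with $L_{\mathbf{e}},L_{\mathbf{f}}\colon B^{m}\to E$ the adjointable maps $(b_j)\mapsto\sum_j e_jb_j$ and $(b_j)\mapsto\sum_j f_jb_j$, and the row/column norm formulas give $\|\mathbf{e}\|=\|L_{\mathbf{e}}\|$ and $\|\mathbf{f}^{*}\|=\|L_{\mathbf{f}}\|$ (the latter by the definition of the adjoint operator space). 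For any invertible $W\in M_m(M(B))$ the $B$-balancing of $\otimes_{hB}$ yields the identity $u=(\mathbf{e}W)\odot(W^{-1}\mathbf{f}^{*})$ — a new representation of the \emph{same} element — and the crucial point is that $W$ must lie over $B$ precisely so that $u$ is unchanged. Writing $W=V^{1/2}$ for positive invertible $V$, this reduces the desired bound to the balancing statement, in the unital $C^*$-algebra $M_m(M(B))$, that for $\alpha=L_{\mathbf{e}}^{*}L_{\mathbf{e}}$ and $\beta=L_{\mathbf{f}}^{*}L_{\mathbf{f}}$ one has
\[
\inf_{V>0}\ \bigl\|\alpha^{1/2}V\alpha^{1/2}\bigr\|\,\bigl\|V^{-1/2}\beta V^{-1/2}\bigr\|
=\bigl\|\alpha^{1/2}\beta^{1/2}\bigr\|^{2}=\|\kappa(u)\|^{2}.
\]
The inequality $\ge$ is automatic from the complete contractivity already proved; for $\le$ I would exhibit a near-optimal $V$ by functional calculus, e.g.\ (after replacing $\alpha,\beta$ by $\alpha+\epsilon,\beta+\epsilon$ to secure invertibility) $V=\alpha^{-1/2}(\alpha^{1/2}\beta\alpha^{1/2})^{1/2}\alpha^{-1/2}$, for which each factor equals $\|\alpha^{1/2}\beta^{1/2}\|$ exactly, the key being the identity $\|M^{*}(MM^{*})^{-1/2}M\|=\|M\|$ with $M=\alpha^{1/2}\beta^{1/2}$.

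It then remains to assemble the conclusion: isometry on finite tensors extends to the completion by density, whence $\kappa$ has closed range; since that range contains every finite-rank operator it is all of $\Compact_B(E)$, and $\kappa$ is an isometric isomorphism. The complete isometry follows by rerunning the whole argument with $E$ replaced by the Hilbert module $E^{(n)}$, using $M_n(\Compact_B(E))\cong\Compact_B(E^{(n)})$ and $M_n(E\otimes_{hB}E^{*})\cong E^{(n)}\otimes_{hB}(E^{(n)})^{*}$. I expect the balancing step to be the only real obstacle: the difficulty is the tension between keeping the \emph{tensor} $u$ fixed (which forces $W$ to act over $B$, not merely over $\mathcal{L}$) and simultaneously rebalancing the two factors to the operator norm (which requires $C^*$-functional calculus), and it is exactly in reconciling these that the Haagerup and operator norms are shown to coincide.
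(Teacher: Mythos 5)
Your argument is essentially correct, but it is genuinely different from what the paper does: the paper offers no proof of Lemma~\ref{lem-cb-compact} at all, simply citing \cite[Corollary 8.2.15]{BlM}, where the result is extracted from Blecher's structural theory of Hilbert $C^*$-modules as operator spaces --- notably the identification of the module Haagerup tensor product with the interior tensor product (the fact this paper later quotes as Lemma~\ref{lem-Blecher-again}), which is proved there by asymptotically factoring a $C^*$-module through the column modules $C_n(B)$. You replace that machinery with a direct, self-contained norm computation: the linking algebra $\Compact_B(E\oplus B)$ gives complete contractivity of $\kappa$ (functoriality of $\otimes_h$ plus complete contractivity of multiplication, then $B$-balancing), and the reverse inequality comes from rebalancing a finite tensor $u=\mathbf{e}\odot\mathbf{f}^*$ by $W=V^{1/2}$ with $V=\alpha^{-1/2}(\alpha^{1/2}\beta\alpha^{1/2})^{1/2}\alpha^{-1/2}$, after which both factors collapse to $\|\alpha^{1/2}\beta^{1/2}\|=\|L_{\mathbf{e}}L_{\mathbf{f}}^*\|=\|\kappa(u)\|$; your polar-decomposition identity and the norm computations $\|\mathbf{e}\|=\|L_{\mathbf{e}}\|$, $\|\mathbf{f}^*\|=\|L_{\mathbf{f}}\|$, $\|\kappa(u)\|=\|\alpha^{1/2}\beta^{1/2}\|$ all check out, and the bootstrap to complete isometry via $E^{(n)}$, $\Compact_B(E^{(n)})\cong M_n(\Compact_B(E))$ and $C_n(E)\otimes_{hB}R_n(E^*)\cong M_n(E\otimes_{hB}E^*)$ is the standard shuffle isomorphism. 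What each approach buys: yours is elementary and exhibits exactly where the Haagerup infimum is (asymptotically) attained; Blecher's machinery yields simultaneously the stronger general statements (Haagerup $=$ interior tensor product, $\Compact_B(E,F)\cong F\otimes_{hB}E^*$) that this paper uses elsewhere.

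Two small points need tightening, neither fatal. First, your rebalancing matrix $W$ does not lie in $M_m(B)$ but (after the $\epsilon$-perturbation) in $\mathbb{C}1_m+M_m(B)$, by functional calculus, whereas $\otimes_{hB}$ balances only over $B$; this is harmless because balancing against scalar matrices is automatic in any tensor product of complex vector spaces --- alternatively, nondegeneracy $E=EB$ and Cohen factorization let you balance over all of $M_m(M(B))$ --- but it should be said, since as stated the identity $u=(\mathbf{e}W)\odot(W^{-1}\mathbf{f}^*)$ for arbitrary invertible $W\in M_m(M(B))$ is exactly the step requiring justification. Second, with the perturbed $\alpha+\epsilon 1$, $\beta+\epsilon 1$ the two factors equal $\|(\alpha+\epsilon 1)^{1/2}(\beta+\epsilon 1)^{1/2}\|$ rather than $\|\kappa(u)\|$ on the nose; you need the monotonicity $\|\alpha^{1/2}V\alpha^{1/2}\|=\|V^{1/2}\alpha V^{1/2}\|\leq\|V^{1/2}(\alpha+\epsilon 1)V^{1/2}\|$ together with norm continuity of $\epsilon\mapsto(\alpha+\epsilon 1)^{1/2}$ to pass to the limit $\epsilon\to 0$. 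Both verifications are routine, so the proof stands.
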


The Haagerup tensor product also fits with Hilbert module theory in a second way:\footnote{A third very elegant connection, which like the first is due to Blecher, will be indicated in Lemma~\ref{lem-Blecher-again}.}
\begin{lemma}
\label{lem-cb-ip}
 \cite[Lemma 3.16]{CCH1}.
If $E$ is a $C^*$-$A$-$B$-corresp\-ondence, then  the inner product induces a   completely contractive map 
\[
E^*\otimes_{hA}  E \longrightarrow B  
\]
of operator $B$-$B$-bimodules. \qed
\end{lemma}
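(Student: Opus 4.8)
The plan is to obtain the desired map as the extension, guaranteed by the universal property of the balanced Haagerup tensor product recalled above, of the bilinear pairing
\[
\Phi \colon E^* \times E \longrightarrow B, \qquad \Phi(e^*, f) = \langle e, f\rangle .
\]
By the characterization of $\otimes_{hA}$ it suffices to verify two things: that $\Phi$ is $A$-balanced, so that it descends to a map on $E^* \otimes_{hA} E$; and that $\Phi$ is completely contractive, so that the descended map is itself completely contractive. The assertion that the result is a map of $B$-$B$-bimodules is a separate, purely algebraic, check.

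First I would dispose of the balancing and bimodule properties, which are routine consequences of the definitions. The right $A$-action on $E^*$ is $e^* \cdot a = (a^* \cdot e)^*$, so that $\Phi(e^* \cdot a, f) = \langle a^* e, f\rangle$; because $A$ acts by \emph{adjointable} operators this equals $\langle e, a f\rangle = \Phi(e^*, a\cdot f)$, which is exactly the $A$-balancing condition. The same bookkeeping, now using $b\cdot e^* = (e\cdot b^*)^*$ together with the module identities $\langle e b^*, f\rangle = b\langle e,f\rangle$ and $\langle e, f b\rangle = \langle e,f\rangle b$, shows that $\Phi$ carries the left and right $B$-actions on $E^* \otimes_{hA} E$ to left and right multiplication on $B$.

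The substantive step is complete contractivity. Here I would unwind the definition of $M_n(\Phi)$: for $u = [e^*_{ij}] \in M_n(E^*)$ and $v = [f_{ij}]\in M_n(E)$ one finds
\[
M_n(\Phi)(u,v) = \Bigl[\, {\textstyle\sum_k \langle e_{ik}, f_{kj}\rangle}\,\Bigr] \in M_n(B).
\]
The key observation is that this matrix is precisely the $M_n(B)$-valued inner product $\langle G, F\rangle$ of the two elements $G = [e_{ji}]$ and $F = [f_{ij}]$ of the Hilbert $M_n(B)$-module $M_n(E)$, where $M_n(E)$ is given the inner product recalled above. The Cauchy--Schwarz inequality for this module then yields $\|M_n(\Phi)(u,v)\| = \|\langle G, F\rangle\| \le \|G\|_{M_n(E)}\,\|F\|_{M_n(E)}$. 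By the very definition of the operator space structure on the adjoint, $\|G\|_{M_n(E)} = \|[e_{ji}]\|_{M_n(E)} = \|[e^*_{ij}]\|_{M_n(E^*)} = \|u\|$, while $\|F\|_{M_n(E)} = \|v\|$; hence $\|M_n(\Phi)(u,v)\| \le \|u\|\,\|v\|$ for every $n$, which is exactly complete contractivity.

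The only point requiring genuine care is the index matching in the last paragraph: one must transpose the matrix of first-variable entries so that the sum $\sum_k \langle e_{ik}, f_{kj}\rangle$ is recognized as an honest Hilbert-module inner product of two matrices over $E$, and then track that transpose through the definition of the adjoint norm so that the two factors reassemble as $\|u\|$ and $\|v\|$. Once this identification is set up correctly the proof is essentially complete: the Cauchy--Schwarz estimate and the universal property of $\otimes_{hA}$ finish everything with no further analysis.
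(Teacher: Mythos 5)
Your proof is correct. Note that the paper itself gives no argument for this lemma---it is quoted with a \qed from \cite[Lemma 3.16]{CCH1}---so there is no internal proof to compare against; your argument is the natural, self-contained one: the balancing and $B$-bimodule identities follow from adjointability of the $A$-action and the Hilbert-module axioms exactly as you say, and the complete contractivity follows from your correct identification of $M_n(\Phi)(u,v)$ with the $M_n(B)$-valued inner product $\langle G,F\rangle$ on the Hilbert $M_n(B)$-module $M_n(E)$ (with $G$ the transposed matrix of first-variable entries, whose $M_n(E)$-norm is $\|u\|_{M_n(E^*)}$ by the definition of the adjoint operator space structure), followed by the Cauchy--Schwarz inequality.
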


The lemmas lead to the following simple, sufficient condition for a $C^*$-correspondence $E$ to admit a left adjoint when viewed as a functor
\[
E\colon \OMod_B\longrightarrow \OMod_A.
\]

\begin{theorem}\label{operator_Frobenius_theorem}
Let $A$ and $B$ be $C^*$-algebras, and let $E$ be a $C^*$-correspondence from $A$ to $B$.  If the action of $A$ on $E$ is through  $B$-compact operators, then the operator $B$-$A$-bimodule $E^*$ is left adjoint to $E$.
\end{theorem}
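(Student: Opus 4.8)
The plan is to produce the unit and counit maps required by Lemma~\ref{lem-linking} and then to verify the two triangle identities stated there; the compactness hypothesis enters precisely in making the unit well defined. For the counit I would simply take the completely contractive $B$-$B$-bimodule map
\[
\epsilon \colon E^* \otimes_{hA} E \longrightarrow B, \qquad \epsilon(e_1^* \otimes e_2) = \langle e_1, e_2\rangle,
\]
furnished by the inner product as in Lemma~\ref{lem-cb-ip}.

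For the unit I would invoke Blecher's completely isometric identification $\kappa \colon E \otimes_{hB} E^* \xrightarrow{\cong} \Compact_B(E)$ of Lemma~\ref{lem-cb-compact}. The left action of $A$ on $E$ is a $*$-homomorphism $\lambda \colon A \to \Adjointable_B(E)$, and by hypothesis its image lies in $\Compact_B(E)$, so I can set
\[
\eta \colon A \xrightarrow{\ \lambda\ } \Compact_B(E) \xrightarrow{\ \kappa^{-1}\ } E \otimes_{hB} E^*.
\]
As the composite of a $*$-homomorphism (hence completely contractive) with a completely isometric isomorphism, $\eta$ is completely bounded. Transporting the $A$-bimodule structure on $E \otimes_{hB} E^*$ across $\kappa$ turns the left and right $A$-actions into left and right multiplication by $\lambda(A)$ inside $\Compact_B(E)$, so the multiplicativity of $\lambda$ shows that $\eta$ is an $A$-bimodule map.

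It then remains to check the two triangle identities of Lemma~\ref{lem-linking}, and both are transparent on elementary tensors once $\lambda(a)$ is written as a (limit of sums of) rank-one operators $\sum_i e_i \otimes f_i^*$. The first composite sends $a \otimes e$ to $\sum_i e_i \langle f_i, e\rangle = \lambda(a)e = a \cdot e$, i.e.\ to the canonical left-action map $A \otimes_{hA} E \to E$; the second sends $f^* \otimes a$ to $\sum_i \langle f, e_i\rangle \cdot f_i^* = \bigl(\textstyle\sum_i f_i \langle e_i, f\rangle\bigr)^* = (\lambda(a)^* f)^* = f^* \cdot a$, i.e.\ to the canonical right-action map $E^* \otimes_{hA} A \to E^*$. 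Since every map involved is completely bounded and the elementary tensors are dense, these identities pass to the closures, and Lemma~\ref{lem-linking} then delivers the adjunction.

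I expect the only genuine point to be the construction of $\eta$: the inner product always produces a counit, but the complementary map into $E \otimes_{hB} E^*$ exists only because $\kappa$ realizes that space as $\Compact_B(E)$ and $A$ is assumed to act through this ideal. Thus the compactness hypothesis is doing exactly the work of guaranteeing that the would-be unit takes values where it must; the triangle identities themselves should be routine, the only care needed being the density and continuity argument that lets the elementary-tensor computations stand.
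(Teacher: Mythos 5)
Your proposal is correct and follows essentially the same route as the paper: the unit is $\kappa^{-1}$ composed with the $*$-homomorphism $A\to\Compact_B(E)$ (using Lemma~\ref{lem-cb-compact} and the compactness hypothesis), the counit is the inner-product map of Lemma~\ref{lem-cb-ip}, and the adjunction follows from the triangle identities of Lemma~\ref{lem-linking}. The only cosmetic difference is that you verify those identities by approximating $\lambda(a)$ with finite sums of rank-one operators and passing to the limit, whereas the paper identifies the relevant composite directly as $T\otimes e\mapsto\kappa(T)e$ and evaluates it at $T=\kappa^{-1}(\alpha(a))$; both arguments are sound.
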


\begin{proof}
The action of $A$ on $E$ gives rise to a $*$-homomorphism
\[
\alpha \colon A \longrightarrow \Compact_B(E),
\]
and hence, by Lemma~\ref{lem-cb-compact}, to a c.b. $A$-$A$-bimodule map
\[
\eta \colon A \longrightarrow E\otimes_{hB} E^* .
\]
On the other hand  by Lemma~\ref{lem-cb-ip} the inner product on $E$ gives us  a c.b.\ $B$-bimodule map
 \[ 
 \epsilon \colon  E^*\otimes_{hA} E \longrightarrow  B .
 \]
 We claim that these are the unit and counit, respectively of an adjunction.  
According to Lemma~\ref{lem-linking} to prove this it suffices to show that the compositions
 \begin{equation}
 \label{eq-linking1}
 A\otimes_{hA} E \xrightarrow{\eta\otimes \mathrm{id} } E\otimes_{hB} E^* \otimes_{hA} E \xrightarrow{\mathrm{id}\otimes\epsilon} E\otimes_{hB} B   \longrightarrow E 
 \end{equation}
 and
 \begin{equation}
 \label{eq-linking2}
E^* \otimes_{hA} A \xrightarrow{\mathrm{id}\otimes \eta} E^*\otimes_{hA} E\otimes_{hB} E^*  \xrightarrow{\epsilon\otimes \mathrm{id}} B\otimes_{hB} E^* \longrightarrow E^*
 \end{equation}
 are the the canonical isomorphisms induced from the left and right $A$- and $B$-module actions on $E$ and $E^*$, respectively.

The composition 
\[
E\otimes_{hB} E^* \otimes_{hA} E \stackrel{\mathrm{id}\otimes \epsilon}\longrightarrow  E\otimes_{hB} B \stackrel{\cong} \longrightarrow E
\]
 is given on elementary tensors by the formula 
 \[
 e_1^{\vphantom{*}}\otimes e_2^*  \otimes e_3^{\vphantom{*}}
  \mapsto e_1\langle e_2,e_3\rangle = \kappa(e_1^{\vphantom{*}}\otimes e_2^*)e_3^{\vphantom{*}},
 \]
 where $\kappa$ is the completely isometric isomorphism of Lemma~\ref{lem-cb-compact}.
On the other hand, the map 
  \[
  A\otimes_{hA} E \stackrel{\eta\otimes \mathrm{id}}\longrightarrow  E\otimes_{hB} E^* \otimes_{hA} E
  \]
is given by the formula 
 \[
 a\otimes e\in E \longmapsto   \kappa^{-1}(\alpha(a)) \otimes e.
 \]
  Combining these two computations, we find that  the   composition \eqref{eq-linking1} is
  \[
 A\otimes _{hA} E \ni   a\otimes e\longmapsto   \kappa^{-1}(\alpha(a)) \otimes e \longmapsto 
    \alpha(a)e \in E,
  \]
  as required. The second composition  \eqref{eq-linking2} is treated similarly.  The composition 
  \[
   E^*\otimes_{hA} E\otimes_{hB} E^*  \xrightarrow{\epsilon\otimes \mathrm{id}} B\otimes_{hB} E^* \longrightarrow E^*
   \]
   is given by the formula
 \[
 e_1^{*}\otimes e_2^{\vphantom{*}}  \otimes e_3^{*}
  	 \mapsto \langle e_1^{\vphantom{*}}, e_2^{\vphantom{*}}\rangle e_3^* 
  	 =\left (  e_3^{\vphantom{*}} \langle e_2^{\vphantom{*}}, e_1^{\vphantom{*}}\rangle\right ) ^* 
   	 = \left ( \kappa(e_2^{\vphantom{*}}\otimes e_3^*)^*e_1\right ) ^{*},
 \]
   while the map 
   \[
   E^* \otimes_{hA} A \xrightarrow{\mathrm{id}\otimes \eta} E^*\otimes_{hA} E\otimes_{hB} E^*
   \]
 is given by the formula 
 \[
 e^*\otimes a \longmapsto e^* \otimes \kappa^{-1}(\alpha (a)).
 \]
  So the composition \eqref{eq-linking2} is 
  \[
  e^*\otimes a \longmapsto e^* \otimes \kappa^{-1}(\alpha (a))
  \longmapsto ( \alpha(a)^* e)^* ,
  \]
  and the image is $e^*\alpha (a)$, as required.
\end{proof}

\section{Operator Modules and Parabolic Induction}
\label{sec-bimodules}
 
We turn now to representations of groups.  Let $G$ be a real reductive group. For definiteness, let us assume, more precisely, that $G$ is the group of real points of a connected reductive group defined over 
$\R$, as we did in \cite{CCH1}, although what we have to say would certainly apply to a broader class of examples.   On the other hand the special linear and general linear groups will suffice to illustrate the results of this paper. 

We shall be  interested in (continuous) unitary representations of $G$, and usually, in particular, in representations that are weakly contained in the regular representation, and so correspond to nondegenerate representations of the reduced $C^*$-algebra of $G$.

Let $P$ be a parabolic subgroup of $G$, with Levi decomposition 
\[
P=LN.
\]
For example if $G$ is a general linear, or special linear, group,  then up to conjugacy $P$ is a subgroup of block-upper-triangular matrices, $L$ is the subgroup of block-diagonal matrices, and $N$ is the subgroup of block-upper-triangular matrices with identity diagonal blocks. See \cite[Section VII.7]{KnappBeyond} for the general definitions.

The functor of \emph{\textup{(}normalised\textup{)} parabolic induction},
\[
\Ind_P^G \colon \HMod_{C^*(L)} \longrightarrow \HMod_{C^*(G)} ,
\]
associates to a unitary representation $\pi\colon L\to U(H)$ (or equivalently, a nondegenerate representation of the full group $C^*$-algebra) the Hilbert space completion of the space of continuous functions
\[
\bigl \{ \, f \colon G \to H \, : \, f (g\ell n ) = \pi(\ell)^{-1} \delta(\ell)^{-\frac 12} f (g) \, \bigr \} ,
\]
where 
\[
\delta(\ell) = \det \bigl(\operatorname{Ad}_\ell\colon \mathfrak n\to \mathfrak n\bigr ) ,
\]
 in the inner product 
\[
\langle f_1 ,f_2\rangle = \int _K \langle f_1(k) ,f_2(k)\rangle _H \, dk ,
\]
where $K$ is a maximal compact subgroup of $G$.   The presence of the normalizing factor $\delta ^{-\frac 12}$ ensures that the Hilbert space so obtained is a unitary representation of $G$ under the left translation action.  If the original representation is weakly contained in the regular representation, then so is the parabolically induced representation.  For all this see for example \cite[Chapter VII]{knapp-overview}.

\subsection{Parabolic Induction and Hilbert C*-Modules}
Pierre Clare began the study of parabolic induction from the point of view of modules and bimodules over operator algebras in   \cite{Clare}.

 Clare realized the functor of normalised parabolic induction as the  tensor product with an explicit $C^*$-correspondence $C^*_r(G/N)$,  from $C^*_r (G)$ to $C^*_r(L)$, which is obtained as a completion of the space of continuous, compactly supported functions on the homogeneous space $G/N$ in a natural (normalized, using $\delta$) inner product valued in $C^*_r(L)$.  Thus he exhibited a natural isomorphism 
 \[
 \Ind _P^G H \cong  C^*_r (G/N) \otimes _{C^*_r (L)} H  
 \]
 of functors from $\HMod_{C^*_r (L)}$ to $\HMod _{C^*_r (G)}$.  See \cite[Section 3]{Clare} or \cite[Section 4]{CCH1}.

 \begin{remark} 
 Actually Clare considered the \emph{full} group $C^*$-algebra in \cite{Clare}.  Here we shall follow the approach in \cite{CCH1} and work with the \emph{reduced} $C^*$-algebra, and the associated reduced version of Clare's bimodule.  The theorem that we shall present below holds in either context, but for later purposes it is more appropriate for us to work with the reduced $C^*$-algebra.
 \end{remark} 
 
 The Hilbert module   picture of parabolic induction as a tensor product allows us to define parabolic induction of operator modules, 
  \[
  \Ind _P^G  \colon \OMod_{C^*_r(L)} \longrightarrow \OMod_{C^*_r (G)}  
 \]
 using the Haagerup tensor product:
 \[
   \Ind _P^G X = C^*_r (G/N) \otimes_{hC^*_r (L)} X.
   \]

\begin{remark} 
By a famous theorem of Harish-Chandra \cite[Theorem 6, p.230]{HC-liminal}, every real reductive group  is of type I; indeed it is liminal. So Proposition~\ref{same_irreducibles_proposition}, concerning irreducible objects in the categories $\OMod_{C^*(G)}$ and  $\OMod_{C^*_r(G)}$ applies. 
\end{remark}

Within the context of operator modules it is natural and simple to consider in addition to $C^*_r (G/N)$ the adjoint operator space $C^*_r(G/N)^*$, which is  an operator $C^*_r (L)$-$C^*_r (G)$-bimodule .  We obtain from the tensor product formula 
\[
   \Res_P^G X = C^*_r (G/N)^* \otimes_{hC^*_r (G)} X 
   \]
a functor
\[ \Res _P^G  \colon \OMod_{C^*_r(G)} \longrightarrow \OMod_{C^*_r (L)}  ,
\]
that we shall call \emph{parabolic restriction}.

\begin{theorem}\label{G_operator_Frobenius_theorem}
Parabolic restriction is left-adjoint to parabolic induction, as functors on operator modules.  Thus there is a natural isomorphism 
\[ \CB_{C^*_r(L)}(   \Res_P^G X, Y) \cong \CB_{C^*_r(G)}(X,  \Ind _P^G  Y)
\]
for all operator $C^*_r(G)$-modules $X$ and all operator $C^*_r(L)$-modules $Y$. 
\end{theorem}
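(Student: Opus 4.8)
The plan is to deduce the theorem directly from Theorem~\ref{operator_Frobenius_theorem}. Setting $A=C^*_r(G)$, $B=C^*_r(L)$ and $E=C^*_r(G/N)$, the bimodule $E$ is by construction a $C^*$-correspondence from $A$ to $B$, and the two associated tensor-product functors $E\otimes_{hB}(\,\cdot\,)$ and $E^*\otimes_{hA}(\,\cdot\,)$ are precisely $\Ind_P^G$ and $\Res_P^G$. Thus Theorem~\ref{operator_Frobenius_theorem} yields exactly the asserted adjunction isomorphism --- with the $C^*_r(L)$-valued inner product on $C^*_r(G/N)$ furnishing the counit and the left $G$-action furnishing the unit --- \emph{provided} its single hypothesis holds: that the left action of $C^*_r(G)$ on $C^*_r(G/N)$ is by $C^*_r(L)$-compact operators. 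Everything therefore reduces to verifying this one compactness statement.

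To verify it, I would first reduce to the dense subalgebra $C_c(G)\subseteq C^*_r(G)$. The left action is a $*$-homomorphism $\lambda\colon C^*_r(G)\to \Adjointable_{C^*_r(L)}(C^*_r(G/N))$, and the $C^*_r(L)$-compacts form a closed ideal in the adjointable operators, so it suffices to show that $\lambda(f)$ is compact for each $f\in C_c(G)$. On the dense submodule $C_c(G/N)$ a routine change of variables (suppressing the modular and $\delta$ normalisations) presents $\lambda(f)$ as the integral operator
\[
(\lambda(f)\xi)(gN) = \int_{G/N} K(gN,yN)\,\xi(yN)\,d(yN), \qquad K(gN,yN)=\int_N f(gn^{-1}y^{-1})\,dn ,
\]
and a short check shows that $K$ is a well-defined, continuous, compactly supported function on the quotient. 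The crucial structural feature is that the base $G/P$ of the fibration $G/N\to G/P$, with fibre $P/N\cong L$, is \emph{compact}.

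Finally I would exhibit $\lambda(f)$ as a norm-limit of finite sums of the rank-one operators $\phi\otimes\psi^*$, with $\phi,\psi\in C_c(G/N)$, which span $\Compact_{C^*_r(L)}(C^*_r(G/N))$ (identified via Lemma~\ref{lem-cb-compact}). Choosing a finite partition of unity on the compact space $G/P$ and using the uniform continuity of $K$ there, the base-directions of the kernel are approximated by finitely many rank-one terms, while the non-compact fibre ($L$-)direction is absorbed into the right $C^*_r(L)$-module structure: the $N$-averaged data along each fibre become the $C^*_r(L)$-valued coefficients of the approximating operators. Bounding the operator norm by the supremum norm of the kernel over the compact base then places $\lambda(f)$ inside $\Compact_{C^*_r(L)}(C^*_r(G/N))$, completing the verification.

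The main obstacle is precisely this last compactness estimate. Although $G/N$ is itself non-compact, its non-compactness lies entirely along the $L$-fibres, which the $C^*_r(L)$-module structure is built to accommodate; the whole argument hinges on the compactness of the flag manifold $G/P$. The delicate point is to make rigorous that averaging $f$ over $N$ converts the fibrewise behaviour into genuine $C^*_r(L)$-coefficients, so that the approximation takes place \emph{within} the $B$-compact operators rather than merely within the bounded adjointable ones.
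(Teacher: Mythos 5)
Your top-level reduction is exactly the paper's proof: setting $A=C^*_r(G)$, $B=C^*_r(L)$, $E=C^*_r(G/N)$, the paper deduces Theorem~\ref{G_operator_Frobenius_theorem} from Theorem~\ref{operator_Frobenius_theorem} together with the single fact that $C^*_r(G)$ acts on $C^*_r(G/N)$ through $C^*_r(L)$-compact operators. The difference is that the paper does not prove that fact at all---it simply cites \cite[Proposition 4.4]{CCH1}---whereas you attempt a direct verification. Your reduction to $f\in C_c(G)$ is fine, your kernel formula is correct, and your identification of the compactness of $G/P$ as the essential geometric input agrees with the paper's own remark in Section~\ref{sec-local-adj}. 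But the verification itself, as sketched, has a genuine gap.

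The problem is your claim that the kernel $K(gN,yN)=\int_N f(gn^{-1}y^{-1})\,dn$ is compactly supported on $G/N\times G/N$. It is not: $K(gN,yN)\neq 0$ only forces $g\in\supp(f)\,yN$, which does not confine $(gN,yN)$ to a compact set. Concretely, for $G=SL(2,\R)$ and $y_t=\diag(e^t,e^{-t})$,
\[
K(y_tN,y_tN)=\int_N f\bigl(y_t n^{-1}y_t^{-1}\bigr)\,dn
= e^{-2t}\int_\R f\!\left(\begin{bmatrix}1&-u\\0&1\end{bmatrix}\right)du ,
\]
which is nonzero for every $t$ once $f\geq 0$ and $f(e)>0$; so $\supp K$ meets the diagonal in a noncompact set. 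The correct statement---and the one your approximation scheme actually needs---is that $K$ is invariant under the diagonal right $L$-action on $G/N\times G/N$ up to a power of the modular factor $\delta(\ell)$, and is compactly supported \emph{modulo} that action; this is precisely where the compactness of $G/P$ enters. The distinction matters because the kernels of your approximating rank-one operators $\phi\otimes\psi^*$ with $\phi,\psi\in C_c(G/N)$ have exactly this invariant form (they involve an integral over $L$ and are never compactly supported on $G/N\times G/N$), so the partition-of-unity argument must be carried out in the space of $L$-invariant kernels with support compact mod $L$, not in $C_c(G/N\times G/N)$. Likewise your closing estimate, bounding the operator norm by the sup norm of the kernel over the compact base, is not valid as stated: the integration is over the noncompact space $G/N$ and the relevant norm is the Hilbert $C^*_r(L)$-module operator norm, so a sup-norm bound over $G/P$ controls nothing by itself; one needs a Schur/Cauchy--Schwarz type estimate against the $C^*_r(L)$-valued inner product, uniform over kernels supported in a fixed compact-mod-$L$ set. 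Both repairs are feasible, but they constitute the actual substance of \cite[Proposition 4.4]{CCH1}; as written, your last two steps do not go through.
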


\begin{proof}
 In \cite[Proposition 4.4]{CCH1} we showed that the action of $C^*_r(G)$ on the $C^*$-correspondence $C^*_r(G/N)$ is by  through compact operators. The result is therefore an immediate consequence of Theorem \ref{operator_Frobenius_theorem}.
\end{proof}

\begin{remark}
The same argument shows  that $C^*(G)$ acts by compact operators on $C^*(G/N)$, and so there is an analogue of Theorem \ref{G_operator_Frobenius_theorem} for operator modules over the full group $C^*$-algebras.
\end{remark}

\subsection{Local Adjunction}
\label{sec-local-adj}
Let us contrast the theorem proved in the previous section with the situation for categories of Hilbert $C^*$-modules.

In \cite{CCH1} we were able to show, using considerable input from representation theory, that the operator bimodule $C^*_r(G/N)^*$ in fact carries the structure of a $C^*$-correspondence.  In other words  its operator space structure is induced from a $C^*_r(G)$-valued inner product. 

It needs to be stressed that this circumstance depends in a delicate way on issues in representation theory; in fact our explicit formula for the inner product is derived from  Harish-Chandra's Plancherel formula.  There is for example no similar inner product within the context of \emph{full} group $C^*$-algebras.

In any case, we can use Kasparov's interior tensor product operation \cite[Chapter 4]{Lance} to define parabolic induction and restriction functors
\[  \Ind\colon \CMod_{C^*_r(L)} \longrightarrow  \CMod_{C^*_r(G)} \]
and 
\[ \Res\colon \CMod_{C^*_r(G)} \longrightarrow \CMod_{C^*_r(L)}\]
between categories of (right) Hilbert $C^*$-modules and adjointable operators between Hilbert $C^*$-modules.  

Kasparov's interior tensor product is related to the Haagerup tensor product in a very simple way:
\begin{lemma}\cite[Theorem 4.3]{Blecher-cstar}. 
\label{lem-Blecher-again}  Let $E$ be a $C^*$-$A$-$B$-correspon\-dence and let $F$ be a $C^*$-$B$-$C$-correspondence. The natural completely bounded map 
\[
E \otimes_{hB} F \longrightarrow E \otimes _{B} F
\]
from the Haagerup tensor product to the Kasparov tensor product is a completely isometric isomorphism. \qed
\end{lemma}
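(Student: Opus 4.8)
The plan is to prove that the natural map is a completely isometric isomorphism by identifying it on the dense algebraic balanced tensor product $E\odot_B F$ and comparing the two norms there directly.

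First I would construct the map and record its complete contractivity. The canonical $B$-balanced bilinear map $E\times F\to E\otimes_B F$ into the Kasparov module is completely contractive: writing a matrix over $E\odot_B F$ and using the interior-product formula $\langle\sum_i e_i\otimes f_i,\sum_j e_j\otimes f_j\rangle=\sum_{i,j}\langle f_i,\phi(\langle e_i,e_j\rangle)f_j\rangle$, together with the bound $\phi_m(G)\le\|G\|\cdot 1$ for the Gram matrix $G=[\langle e_i,e_j\rangle]$ (here $\phi$ is the action of $B$ on $F$), one obtains $\|u\|_{\otimes_B}\le\|u\|_{hB}$ at every matrix level, since $\|G\|$ equals the row norm $\|\sum_i e_i e_i^*\|$. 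Because $E\otimes_B F$ is itself an operator space via its Hilbert $C$-module structure, the universal property of the Haagerup tensor product then yields the asserted completely contractive map $E\otimes_{hB}F\to E\otimes_B F$, which is the identity on elementary tensors and has dense range.

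The substance of the lemma is the reverse inequality $\|u\|_{hB}\le\|u\|_{\otimes_B}$ for $u\in E\odot_B F$. Here I would use the row–column description $\|u\|_{hB}=\inf\|\sum_i e_ie_i^*\|^{1/2}\,\|\sum_i\langle f_i,f_i\rangle\|^{1/2}$, the infimum being over all finite representations $u=\sum_i e_i\otimes f_i$, and exploit the freedom to rebalance a representation across $\otimes_B$. The key manoeuvre is a polar-type rescaling: for invertible positive $a\in M_m(\widetilde B)$ the balancing identity gives $u=(e\,a)\odot(a^{-1}f)$, and taking $a=G_\epsilon^{-1/2}$ with $G_\epsilon=G+\epsilon\cdot 1$ makes the new Gram matrix $G_\epsilon^{-1/2}G\,G_\epsilon^{-1/2}\le 1$, so the rescaled row has norm at most $1$, while the rescaled column satisfies $\|a^{-1}f\|^2=\|f^*\phi_m(G_\epsilon)f\|\to\|u\|_{\otimes_B}^2$ as $\epsilon\downarrow 0$. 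Letting $\epsilon\to 0$ gives $\|u\|_{hB}\le\|u\|_{\otimes_B}$.

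Combining the two inequalities shows the map is isometric on $E\odot_B F$, hence extends to an isometric isomorphism onto the completion $E\otimes_B F$. To upgrade to a \emph{complete} isometry I would run the same computation one matrix level up, viewing $M_n(E)$ and $M_n(F)$ as correspondences over $M_n(A)$–$M_n(B)$ and $M_n(B)$–$M_n(C)$ and using that the matrix norms of both tensor products are computed from these. The main obstacle is the reverse inequality, and specifically checking that the $G_\epsilon^{-1/2}$-rescaling is legitimate when $B$ is nonunital; this is exactly why one passes to the unitization $\widetilde B$ and to $G_\epsilon$, and why the estimate is obtained only in the limit $\epsilon\to 0$ rather than on the nose.
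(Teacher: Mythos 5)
The paper gives no proof of this lemma at all---it is quoted from \cite[Theorem 4.3]{Blecher-cstar} (see also \cite[Theorem 8.2.11]{BlM})---so the honest comparison is with Blecher's published argument, and your route is genuinely different and more elementary. Blecher deduces the theorem from his operator-space structure theory for Hilbert $C^*$-modules, in which $E$ is factored asymptotically through column modules $C_n(B)=B^n$, for which $C_n(B)\otimes_{hB}F\cong C_n(F)\cong C_n(B)\otimes_B F$ is immediate; you instead compare the two norms directly on $E\odot_B F$, and your computation is correct. The contractivity half is right: $\|u\|_{\otimes_B}^2=\|\langle f,\phi_m(G)f\rangle\|\le\|G\|\,\|\langle f,f\rangle\|$ at every matrix level, and the universal property of $\otimes_{hB}$ then gives the complete contraction---note that this uses only the trivial upper-bound half of the row--column formula for $\|\cdot\|_{hB}$, never its exactness, so there is no circularity. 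The rebalancing half is also right, including the nonunital care: functional calculus gives $G_\epsilon^{\pm 1/2}\in\mathbb{C}1_m+M_m(B)$, so $(eG_\epsilon^{-1/2})\odot(G_\epsilon^{1/2}f)=e\odot f$ in $E\odot_B F$ using only $B$-balancing and bilinearity over the scalars, and then $G_\epsilon^{-1/2}GG_\epsilon^{-1/2}\le 1$ together with $\|\langle f,\phi_m(G_\epsilon)f\rangle\|\le\|u\|^2_{\otimes_B}+\epsilon\|\langle f,f\rangle\|$ yields $\|u\|_{hB}\le\|u\|_{\otimes_B}$ in the limit. The one place I would adjust your plan is the passage to complete isometry: the identification $M_n(E\otimes_{hB}F)\cong M_n(E)\otimes_{hM_n(B)}M_n(F)$ that your appeal to matrix correspondences implicitly requires is true, but is itself a lemma needing proof; it is cleaner to rerun your argument verbatim with rectangular matrices $x\in M_{n,m}(E)$, $y\in M_{m,n}(F)$, using the directly verifiable identity $\|x\odot y\|^2_{M_n(E\otimes_B F)}=\|\langle y,\phi_m(\langle x,x\rangle)y\rangle\|_{M_n(C)}$, after which level $n$ is word-for-word level $1$. (Also a harmless slip: $\|\sum_i e_ie_i^*\|$ is the \emph{square} of the row norm, i.e.\ $\|[e_1,\dots,e_m]\|^2=\|G\|$.) What your approach buys is a self-contained proof from the bare definitions, with the exactness of the infimum formula for the balanced Haagerup norm on correspondences obtained as a by-product; what Blecher's approach buys is that the same machinery also produces the companion facts quoted in this paper, such as Lemma~\ref{lem-cb-compact}.
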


See also  \cite[Theorem 8.2.11]{BlM}.   But despite the lemma, and despite the theorem proved in the previous section, it is \emph{not} true that the two functors above are adjoint to one another.  Instead, the best result available is that there are natural isomorphisms
\begin{equation}\label{local_adjoint_equation}
 \Compact_{C^*_r(L)}(\Res X, Y) \cong \Compact_{C^*_r(G)}(X, \Ind Y) 
\end{equation}
between the spaces of \emph{compact} adjointable operators. See  \cite[Theorem 5.1]{CCH2}.

In contrast to all this, our operator module result, Theorem~\ref{G_operator_Frobenius_theorem},  is stronger and relies only on  the fact that $C^*_r(G)$ acts through compact operators on $C^*_r (G/N)$.  This is in turn an easy consequence of the geometry of $G$, involving no representation theory; {the essential point is that the homogeneous space $G/P$ is compact.}  

\subsection{SL(2,R)}
\label{sec-SL(2,R)1}  In order to explore the issues of the previous section a bit further, let us consider the special case of the group $SL(2,\R)$.

The general structure of the reduced $C^*$-algebra of a real reductive group is  summarized in \cite[Theorem 6.8]{CCH1}.  We won't repeat the general story  here, but instead we shall focus on $SL(2,\R)$ alone.  This example are also treated in \cite[Example 6.10]{CCH1}.\footnote{There is a long prior history of results on this topic (the reference \cite{CCH1} is certainly not a primary source) and we won't repeat that either, except to mention  \cite[Section 4]{BoMar}, where the reader can find a prior set of full details for  the  $SL(2,\R)$ calculation.}  

Up to conjugacy there is a unique nontrivial parabolic subgroup in $G=SL(2,\R)$, namely the group $P$ of upper triangular matrices, with Levi factor $L$ the diagonal matrices in $SL(2,\R)$.  The (necessarily one-dimensional) irreducible unitary representations of $L$ divide into two classes---the \emph{even} representations  where $\left [ \begin{smallmatrix} -1 & 0 \\ 0 & -1 \end{smallmatrix}\right ]$ acts as $1$, and the \emph{odd} representations where it acts as $-1$.  
There is accordingly a direct sum decomposition 
\[
C^*_r(L)\cong C^*_r(L)_{\text{even}} \oplus C^*_r(L)_{\text{odd}} 
\]
in which the even representations factor through the projection onto the even summand, and the odd representations factor through the projection onto the odd summand.  Both summands are isomorphic to $C_0(\R)$ as $C^*$-algebras.

Parabolically inducing the even and odd unitary representations of $L$, we obtain the even and odd \emph{principal series} representations of $G$.  Apart from these, among the irreducible unitary representations of $G$ that are weakly contained in the regular representation there are also  the \emph{discrete series} representations. Associated to this division of the representations  of $C^*_r (G)$ into three types there is a three-fold direct sum decomposition
\[
C^*_r (G)
\cong  
C^*_r(G)_{\text{discrete}}  \oplus C^*_r(G)_{\text{even}} \oplus C^*_r(G)_{\text{odd}} .
\]

Finally, there is a compatible direct sum decomposition 
\[
C^*_r (G/N) \cong C^*_r (G/N)_{\text{even}} \oplus C^*_r (G/N) _{\text{odd}}
\]
under which the reduced $C^*$-algebras of both $G$ and $L$ act on the even and odd parts through the projections onto their respective even and odd summands.

In what follows we shall concentrate on the \emph{even} summands.  The odd summands are similar, but a bit harder to describe in the case of $C^*_r (G)$.  However the situation as regards adjunctions is actually simpler and less interesting for the odd summands, and this is the reason that we shall concentrate on the even parts.  The discrete part of $C^*_r(G)$ plays no role at all, since it acts trivially on $C^*_r (G/N)$.

There is a $C^*$-algebra isomorphism
\[
C^*_r (G)_{\text{even}} \cong  C_0\bigl (\R, \Compact(H)\bigr )^{\mathbb Z/2\mathbb Z} 
\]
where $H$ is a separable infinite-dimensional Hilbert space, and  the two-element group $\mathbb Z/2\mathbb Z$ acts  on $\R$ by multiplication by $-1$, while it acts on $\Compact(H)$ trivially.
 
There is an isomorphism of Hilbert modules 
\[
C^*_r (G/N)_{\text{even}} \cong C_0(\R, H)
\]
under which 
\begin{enumerate}[\rm (a)]
\item The left action of $C^*_r (G)_{\text{even}}$ becomes  the obvious pointwise action under the isomorphisms given above.
\item The right action of $C^*_r (L)_{\text{even}}$ is by pointwise multiplication under the identification of $C^*_r(L)_{\text{even}}$ with $C_0(\R)$, and the inner product is the pointwise inner product.
 \item The $C^*_r(G)$-valued inner product on $C^*_r(G/N)_{\text{even}}^*$ takes values in the ideal $C^*_r (G)_{\text{even}}$, and is given by
 \[
  \langle f_1,f_2\rangle_{C^*_r (G)} =   \tfrac{1}{2} f_1   \otimes f_2 ^* + \tfrac{1}{2}  w(f _1)  \otimes w(f_2)^*  ,
  \]
  where $f_1,f_2\in C_0(\R, H)$, where 
  \[
  w(f)(x) = f(-x),
  \]
   and where the tensors on the right hand side are to be viewed as rank one adjointable  operators on  $C_0(\R, H)$.
\end{enumerate}
From all of this, and keeping in mind the obvious Morita equivalence
\[
 C_0\bigl (\R, \Compact(H)\bigr )^{\mathbb Z/2\mathbb Z}  \underset{\text{Morita}}\sim C_0(\R)^{\mathbb Z / 2 \mathbb Z},
 \]
{ we find that the problem of formulating an adjunction theorem for the
$C^*$-correspondence  $C^*_r(G/N)$ comes down to the same  for the data
\[ 
A  =  C_0(\R)^{\mathbb Z / 2 \mathbb Z}, \qquad B = C_0(\R), \qquad E  = C_0(\R),\]
with $E$ being regarded as a $C^*$-$A$-$B$-correspondence in the obvious way.} 
}

Frobenius reciprocity in the operator-module setting (Theorem \ref{G_operator_Frobenius_theorem}) reduces here to simple case considered in Example \ref{operator_Frobenius_example}: the unit 
\[
\eta \colon A\longrightarrow  E\otimes_{hB} E^ *
\]
 is the inclusion (the tensor product is canonically isomorphic to $B$ via the product), while the counit 
 \[
 \epsilon \colon E^*\otimes_{hA} E\longrightarrow  B
 \]
  is the product. 
  
  In contrast, the local adjunction isomorphism \eqref{local_adjoint_equation} in the Hilbert $C^*$-module setting is equivalent to the assertion that the conjugate operator space structure on $E^*$ coincides with  one induced by an $A$-valued inner product, namely the inner product
   \[ 
\langle f_1,f_2\rangle_A = \tfrac12  f_1^*f_2 +  \tfrac12  w (f_1^*f_2)   .
\]
The failure of the local adjunction isomorphism to extend to an isomorphism on all adjointable operators is a consequence of the fact that the counit 
$\epsilon$ defined above is a completely bounded map of $B$-bimodules, but not an adjointable map of Hilbert modules 
when the Haagerup tensor product is identified with Kasparov's internal tensor product using Lemma~\ref{lem-Blecher-again}.

\section{The Second Adjoint Theorem}\label{Bernstein_section}

For smooth representations of reductive $p$-adic groups, Bernstein made the remarkable discovery  that parabolic induction has not only a left adjoint, but also a right adjoint too, which is also given by parabolic restriction, but with respect to the \emph{opposite} parabolic subgroup (the transpose). See \cite{Bernstein-2-adjoint}, or, for an exposition, \cite[Chapter VI]{Renard}.

Bernstein's second adjoint theorem plays an important foundational role in the representation theory of $p$-adic groups, leading to a direct product decomposition of the category of smooth representations into component categories.   See for example  \cite[Chapter VI]{Renard} again.    Similar structure can be seen in the \emph{tempered} representation theory of both real and $p$-adic reductive groups, and one of the main  motivations for the work presented in \cite{CCH2,CCH1} was to obtain something similar to Bernstein's theorem in categories of representations related to the reduced group $C^*$-algebra. 

The local adjunction  isomorphism of \cite{CCH2} that we described in Section~\ref{sec-local-adj} is a partial solution. But it is not altogether satisfactory, since in the $p$-adic context Bernstein's theorem is a geometric foundation from which representation theory may be built up,\footnote{In this context see the recent article \cite{BezK} for a beuatiful, geometric approach to Bernstein's original theorem.} whereas our local adjunction theorem required an extensive acquaintance with tempered representation theory to formulate and prove.

So the question remains whether or not a suitable counterpart of Bernstein's second adjoint theorem can be developed in an operator-algebraic context.  We shall investigate this issue in detail elsewhere; our purpose here is to present two computations in the simple case of the group $SL(2,\R)$ that together indicate a possibly interesting role here for operator algebras and operator modules.

\subsection{Harish-Chandra's Schwartz space}  If $G$ is a real reductive group, as before, then its
 \emph{Harish-Chandra algebra}  is a Fr\'echet convolution algebra $\HC(G)$ of smooth, complex valued functions on $G$ that is perhaps easiest to present here as a distinguished subalgebra of    $C^*_r(G)$ that is closed under the holomorphic functional calculus.  
 
 The definition of $\HC(G)$ is a bit involved. Moreover  it   is not by any means obvious, even after one has mastered the definitions, that $\HC(G)$ is closed under convolution multiplication (see for example \cite[Section 7.1]{Wallach1} for the details). We shall avoid these difficulties here by using a Fourier-dual description of $\HC(G)$   that will suffice for our present limited purposes; see the next section.

In any case, we shall study the following module category. In the context of Fr\'echet spaces, in this section  and the next, the symbol $\otimes$ will denote the completed projective tensor product of Fr\'echet spaces.

\begin{definition}
\label{def-sfmod}
Let $\mathcal{A}$ be a Fr\'echet algebra (that is, a Fr\'echet space equipped with a (jointly) continuous and associative multiplication operation).
 A \emph{smooth Fr\'echet module} over   $\mathcal{A}$ is a Fr\'echet space $V$ which is equipped with a continuous  $\mathcal{A}$-module structure, such that the evaluation map 
\[ \mathcal{A}\otimes_{\mathcal{A}} V\to V \qquad a\otimes v\mapsto av\]
is an isomorphism.
\end{definition}

 \begin{remark}
 \label{rem-btp}
The tensor product $ \mathcal{A}\otimes_{\mathcal{A}} V$ used in the above definition is  the quotient of the completed projective tensor product $\mathcal{A}\otimes V $ by the closed subsapce generated by the balancing relators 
\[
a_1a_2 \otimes v - a_1 \otimes a_2 v
\]
with $a_1,a_2\in \mathcal{A}$ and $v\in V$. 
\end{remark}

\begin{definition}
\label{def-sfmod-cat}
We denote by  $\SFMod_{\mathcal{A}}$  the category of smooth Fr\'echet modules over $\mathcal{A}$, with continuous $\mathcal{A}$-linear maps as morphisms.
\end{definition}

If $\mathcal{E}$ is a smooth $\mathcal{A}$-$\mathcal{B}$-Fr\'echet bimodule, then the tensor product construction  in Remark~\ref{rem-btp} gives us a tensor product   functor 
\[
\mathcal{E} \colon \SFMod_{\mathcal{B}}\longrightarrow \SFMod_{\mathcal{A}}.
\]
We shall study parabolic induction from the perspective of such functors in the next section.  We should remark that if $\mathcal{A}$ is a Frechet algebra, then it is not necessarily true that the mutliplication map 
\[
\mathcal{A}\otimes_{\mathcal{A}} \mathcal{A} \longrightarrow \mathcal{A}
\]
is an isomorphism, but this \emph{is} true for the Harish-Chandra algebras that we shall be studying.

\subsection{The Harish-Chandra algebra of SL(2,R)}  We shall now specialize to $G=SL(2,\R)$ and is parabolic subgroup $P=LN$ of upper triangular matrices. 

The Harish-Chandra algebra for $L$ admits a decomposition
\[
\HC(L) = \HC(L)_{\text{even}} \oplus \HC(L)_{\text{odd}} ,
\]
that is compatible with the decomposition of the reduced group $C^*$-algebra.  Both  the even and odd summands are isomorphic as Fr\'echet algebras to the space  $\mathcal{S}(\R)$ of Schwartz functions on the line, with pointwise multiplication. 

Similarly   there is a decomposition
\[
\HC(G) = \HC(G)_{\text{discrete}} \oplus  \HC(G)_{\text{even}} \oplus  \HC(G)_{\text{odd}} 
\]
that is compatible with the decomposition of the reduced $C^*$-algebra  in Section~\ref{sec-SL(2,R)1}. 

Once again we shall concentrate on the even parts. There is an isomorphism
 \[
\HC(G)_{\text{even}} \cong \mathcal{S}\left(\R, \mathcal{K}(H)\right)^{\Z/2\Z}
\]
in which the algebra appearing on the right is as follows.  
\begin{enumerate}[\rm (a)]
\item The Hilbert space $H$ has a preferred orthonormal basis indexed by even integers (the Hilbert space carries an  $SO(2)$ representation, and the basis vectors are weight vectors).  
\item The right-hand algebra consists of continuous functions $f$ from $\R$ into the compact operators on $H$, invariant under the same $\Z/2\Z$ action as before.
\item If $p$ is any continuous seminorm on the space of  Schwartz functions on the line, and if $f_{ij}$ denotes the $ij$-matrix entry of $f$ with respect to the given orthonormal basis of $H$, then $p(f_{ij})$ is of rapid decay in $i$ and $j$. 
\end{enumerate}
Compare \cite{Arthur} and \cite[Chapter 8]{Varadarajan}.

Finally there is the bimodule $\HC(G/N)$, which  consists of suitable rapid decay functions on $G/N$, as in  \cite[Section 15.3]{Wallach2}. There is a decomposition 
\[
\HC(G/N) = \HC(G/N) _{\text{even}} \oplus \HC(G/N) _{\text{odd}} 
\]
as before, and there is an isomorphism
\[
\HC(G/N)_{\text{even}} \cong \mathcal{S}(\R, H) ,
\]
where on the right hand side are the functions $f\colon \R \to H$ whose component functions  $f_j$ are of rapid decay with respect to any Schwartz space seminorm, as in (c) above. 

Since there is again  a Morita equivalence 
\[
\mathcal{S}\left(\R, \mathcal{K}(H)\right)^{\Z/2\Z} \underset{\text{Morita}}\sim \S(\R)^{\Z/2\Z}
\]
(that is, an equivalence of  $\SFMod$ categories) we are finally reduced to studying adjunction theorems in the following Fr\'echet context:
\[ 
\mathcal{A}  =  \mathcal{S}(\R)^{\mathbb Z / 2 \mathbb Z}, \qquad 
\mathcal{B}  = \mathcal{S}(\R), \qquad
\mathcal{E}  = \mathcal{S}(\R),
\]
with $\mathcal{E}$ being assigned the structure of a smooth $\mathcal{A}$-$\mathcal{B}$-bimodule in the obvious way.

 So far this is of course an uninteresting reworking of the computations that we made in Section~\ref{sec-SL(2,R)1}.  And the situation with regard to Frobenius reciprocity is similarly predictable: 
if we define 
 \[
 \mathcal{F}   = \mathcal{S}(\R),
 \]
 with its obvious $\mathcal{B}$-$\mathcal{A}$-bimodule structure, then, exactly as before:

 \begin{theorem}
\label{thm-schw-frob}
 The bimodule maps
 \[ 
 \eta \colon \mathcal A \longrightarrow  \mathcal E\otimes_{\mathcal B} \mathcal F  \qquad \text{and} \qquad \epsilon\colon  \mathcal F\otimes_{\mathcal A} \mathcal E\longrightarrow  \mathcal B\]
 defined by
 \[ \eta(a_1a_2)=a_1\otimes a_2 \qquad \text{and} \qquad \epsilon (f\otimes e) = fe\]
 are the unit and counit of an adjunction.  \qed
\end{theorem}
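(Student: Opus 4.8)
The plan is to transport the operator-module development culminating in Example~\ref{operator_Frobenius_example} verbatim into the Fr\'echet setting, the point being that the unit/counit characterization of an adjunction is purely formal category theory. Concretely, $\mathcal E$ and $\mathcal F$ determine functors between $\SFMod_{\mathcal B}$ and $\SFMod_{\mathcal A}$ by the balanced tensor product of Remark~\ref{rem-btp}, and for smooth modules one has the canonical isomorphism $\mathcal A\otimes_{\mathcal A}V\cong V$ built into Definition~\ref{def-sfmod}, together with its right-handed analogue $V\otimes_{\mathcal B}\mathcal B\cong V$. Using these, the bimodule maps $\eta$ and $\epsilon$ induce natural transformations $\mathrm{id}\Rightarrow\mathcal E\circ\mathcal F$ and $\mathcal F\circ\mathcal E\Rightarrow\mathrm{id}$, and by the standard triangle-identity criterion (the converse direction of Lemma~\ref{lem-linking}, valid in any such tensor category; compare \cite[Chapter IV]{MacLane}) it suffices to check the two zig-zag identities. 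No representation theory and no $C^*$-structure are needed.

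Before verifying them I would check that $\eta$ and $\epsilon$ are well defined and continuous. The counit $\epsilon\colon\mathcal F\otimes_{\mathcal A}\mathcal E\to\mathcal B$ is simply the descent to the $\mathcal A$-balanced projective tensor product of the jointly continuous, $\mathcal A$-balanced pointwise multiplication $\mathcal S(\R)\times\mathcal S(\R)\to\mathcal S(\R)$, and it is manifestly a $\mathcal B$-bimodule map. For the unit I would first record the key analytic input: because $\mathcal S(\R)$ with pointwise product is one of the Harish-Chandra algebras under study, it is smooth over itself, i.e.\ multiplication gives an isomorphism $\mathcal S(\R)\otimes_{\mathcal S(\R)}\mathcal S(\R)\xrightarrow{\cong}\mathcal S(\R)$ (a Dixmier--Malliavin type factorization; this is the fact granted in the remark following Definition~\ref{def-sfmod-cat}). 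Under this identification $\mathcal E\otimes_{\mathcal B}\mathcal F\cong\mathcal B$, and $\eta$ becomes nothing but the inclusion of the invariants $\mathcal A=\mathcal S(\R)^{\Z/2\Z}\hookrightarrow\mathcal S(\R)=\mathcal B$, a continuous $\mathcal A$-bimodule map; the prescription $\eta(a_1a_2)=a_1\otimes a_2$ is exactly what this identification unwinds to.

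It then remains to evaluate the two compositions of Lemma~\ref{lem-linking} on elementary tensors. This is the same computation as in Example~\ref{operator_Frobenius_example} and in the proof of Theorem~\ref{operator_Frobenius_theorem}, now with pointwise multiplication of Schwartz functions in place of the $C^*$-product and the Hilbert-module inner product: writing $\eta(a)=\sum_i a_i'\otimes a_i''$ with $\sum_i a_i'a_i''=a$, the first composition sends $a\otimes e\mapsto\sum_i a_i'(a_i''e)=ae$ and the second sends $f\otimes a\mapsto\sum_i(fa_i')a_i''=fa$, i.e.\ the canonical action isomorphisms $\mathcal A\otimes_{\mathcal A}\mathcal E\cong\mathcal E$ and $\mathcal F\otimes_{\mathcal A}\mathcal A\cong\mathcal F$. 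Since elementary tensors span a dense subspace and every map involved is continuous, these identities pass to the completions, and the adjunction follows.

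The main obstacle is not the formal argument but the Fr\'echet-analytic bookkeeping: one must know that the balanced projective tensor products appearing here are the expected objects and, above all, that the self-factorization $\mathcal S(\R)\otimes_{\mathcal S(\R)}\mathcal S(\R)\cong\mathcal S(\R)$ holds, since this single fact is what legitimizes both the categorical machinery (the smooth-module identifications) and the computation of $\eta$. Unlike the $C^*$-case there is no norm-completeness shortcut, so continuity and the density of elementary tensors must be tracked through the defining seminorms; but once the factorization is in hand the verification is entirely formal, which is precisely why the result is, as the text says, \emph{exactly as before}.
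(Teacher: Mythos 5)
Your proposal is correct and takes essentially the same route as the paper, which states the theorem with a \qed precisely because the argument is ``exactly as before'': the unit/counit triangle-identity verification of Example~\ref{operator_Frobenius_example} and Lemma~\ref{lem-linking}, transported to the Fr\'echet setting. You also correctly isolate the one analytic input that makes this transport legitimate --- the isomorphism $\mathcal B\otimes_{\mathcal B}\mathcal B\cong\mathcal B$ granted in the remark following Definition~\ref{def-sfmod-cat} --- which is exactly what makes $\eta$ well defined and reduces everything to the formal computation.
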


But the situation with regard to ``Bernstein reciprocity,'' or the assertion that  $\mathcal{E}$ also has a \emph{right} adjoint, is much more interesting.  Surprisingly, in view of the fact that in most respects the Fr\'echet algebra $\HC(G)$ behaves much  like $C^*_r(G)$, there is a striking difference between the two regarding the second adjoint theorem, which in fact \emph{does} hold in the Harish-Chandra context.

We wish to define   a candidate unit map  
\[
 \mathcal{B} \longrightarrow \mathcal{F} \otimes _{\mathcal{A}} \mathcal{E} 
\]
as follows:
\begin{equation}
\label{eq-spectral-berns-unit}
b_1b_2\mapsto b_1x \otimes b_2  + b_1\otimes x b_2
\end{equation}
for $b_1,b_2\in \mathcal B$ (we are writing $x$ for the function $x\mapsto x$).  It is not immediately obvious that the formula is well-defined. But the following calculation shows that this is so:

\begin{lemma}\label{B-balanced-lemma}
The quantity in $ \mathcal{F} \otimes _{\mathcal{A}} \mathcal{E}$ described in \eqref{eq-spectral-berns-unit} depends only on the   product $b_1b_2\in \mathcal{B}$, and the formula defines a continuous $B$-bimodule homomorphism.
\end{lemma}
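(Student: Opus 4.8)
The plan is to realise the right-hand side of \eqref{eq-spectral-berns-unit} as the value of the continuous bilinear map
\[
\Phi\colon \mathcal{B}\times\mathcal{B}\longrightarrow \mathcal{F}\otimes_{\mathcal{A}}\mathcal{E},\qquad \Phi(b_1,b_2)=b_1x\otimes b_2 + b_1\otimes x b_2,
\]
which is continuous because multiplication by $x$ is a continuous endomorphism of $\mathcal{S}(\R)$ and the canonical map $\mathcal{F}\times\mathcal{E}\to\mathcal{F}\otimes_{\mathcal{A}}\mathcal{E}$ is continuous bilinear. Let $\tilde\Phi\colon\mathcal{B}\otimes\mathcal{B}\to\mathcal{F}\otimes_{\mathcal{A}}\mathcal{E}$ be the induced continuous linear map. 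The goal is to show that $\tilde\Phi$ factors through the multiplication $m\colon\mathcal{B}\otimes\mathcal{B}\to\mathcal{B}$. Since $\mathcal{B}=\mathcal{S}(\R)$ is one of the algebras for which the multiplication map $\mathcal{B}\otimes_{\mathcal{B}}\mathcal{B}\to\mathcal{B}$ is an isomorphism, the kernel of $m$ is precisely the closed span (in the sense of Remark~\ref{rem-btp}) of the balancing relators $b_1 c\otimes b_2 - b_1\otimes cb_2$, with $b_1,c,b_2\in\mathcal{B}$; so by continuity of $\tilde\Phi$ it is enough to verify that $\tilde\Phi$ annihilates each such relator. Granting this, $\tilde\Phi$ descends to a continuous map $\Psi\colon\mathcal{B}\cong\mathcal{B}\otimes_{\mathcal{B}}\mathcal{B}\to\mathcal{F}\otimes_{\mathcal{A}}\mathcal{E}$ which by construction sends $b_1b_2$ to the expression in \eqref{eq-spectral-berns-unit}.

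The heart of the matter is the relator computation, and it is exactly here that both terms of \eqref{eq-spectral-berns-unit} are needed. Expanding,
\[
\tilde\Phi(b_1c\otimes b_2 - b_1\otimes cb_2)=b_1 cx\otimes b_2 + b_1 c\otimes xb_2 - b_1 x\otimes cb_2 - b_1\otimes xcb_2,
\]
and I would show this vanishes by splitting $c$ into even and odd parts, bearing in mind that the tensor product is balanced only over the \emph{even} algebra $\mathcal{A}$, so only even functions may be transported across $\otimes_{\mathcal{A}}$. If $c$ is even then $c\in\mathcal{A}$, and moving $c$ across the tensor cancels the first against the third term and the second against the fourth. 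The nontrivial case is $c$ odd: then $c=xd$ with $d$ even Schwartz (every odd Schwartz function factors this way, $d(t)=c(t)/t$ extending smoothly and evenly through the origin by the usual division lemma), whence $cx=x^2d$ and $xc=x^2d$ are \emph{even}. Transporting these even functions across $\otimes_{\mathcal{A}}$ now cancels the first term against the fourth and the second against the third. Thus the relator is killed in each parity, hence for every $c$ by linearity and continuity. This odd-$c$ case is the only genuine obstacle; the rest is formal.

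Finally, continuity of $\Psi$ is inherited from that of $\tilde\Phi$ together with the quotient isomorphism $\mathcal{B}\otimes_{\mathcal{B}}\mathcal{B}\cong\mathcal{B}$. For the $\mathcal{B}$-bimodule property I would argue on the dense subspace of products: for $b'\in\mathcal{B}$ one has $\Psi(b'\cdot b_1b_2)=\Psi\bigl((b'b_1)b_2\bigr)=\Phi(b'b_1,b_2)=b'\cdot\Phi(b_1,b_2)=b'\cdot\Psi(b_1b_2)$, using the left $\mathcal{B}$-action on $\mathcal{F}$; similarly $\Psi(b_1b_2\cdot b'')=\Psi(b_1b_2)\cdot b''$ using the right $\mathcal{B}$-action on $\mathcal{E}$. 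Hence the continuous maps $b\mapsto\Psi(b'b)$ and $b\mapsto b'\Psi(b)$ agree on products, which span a dense subspace (the map $m$ being onto), and therefore agree everywhere; the right-hand identity follows the same way. This gives the asserted continuous $\mathcal{B}$-bimodule homomorphism.
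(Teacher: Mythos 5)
Your proof is correct and takes essentially the same route as the paper's: your decomposition $c=c^{+}+xd$ with $d$ even and Schwartz is exactly the paper's $b=a_1+a_2x$ with $a_1,a_2\in\mathcal{A}$, and both arguments then verify the $\mathcal{B}$-balancing relation term by term (using that $x^2d\in\mathcal{A}$ can be moved across $\otimes_{\mathcal{A}}$) and descend via the isomorphism $\mathcal{B}\otimes_{\mathcal{B}}\mathcal{B}\cong\mathcal{B}$. The only differences are presentational: you organize the computation as a pairwise cancellation of the four relator terms, and you spell out the continuity and bimodule-homomorphism verifications that the paper leaves implicit.
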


\begin{proof}
Let us first  show that if  $b\in \mathcal{B}$, then  
\[
 b_1bx \otimes b_2  + b_1b\otimes x b_2 =  b_1 x \otimes bb_2  + b_1 \otimes x bb_2 .
 \]
We can write 
\[
b = a_1 + a_2 x ,
\]
where $a_1,a_2\in \mathcal{A}$, and it suffices to consider separately the cases where $a_1=0$ and $a_2=0$.  The latter is easy, since the tensor products are over $\mathcal {A}$.  As for the former, we calculate that 
\[
\begin{aligned}
 b_1(a_2x)x \otimes b_2  + b_1(a_2x)\otimes x b_2 &=  b_1   \otimes a_2x^2b_2  + b_1x \otimes   a_2 xb_2 \\
 	&=  b_1x \otimes   a_2 xb_2 +  b_1   \otimes x a_2x   b_2  ,
		\end{aligned}
 \]
 as required (we used the fact that $x^2 a_2 \in \mathcal {A}$).  So the formula defines a continuous map 
 \[
 \mathcal{B}\otimes _{\mathcal{B}} \mathcal{B} \longrightarrow \mathcal{F} \otimes _{\mathcal{A}} \mathcal{E},
 \]
 and the lemma follows from the easily verified   fact that the multiplication map 
 \[
  \mathcal{B}\otimes _{\mathcal{B}} \mathcal{B} \longrightarrow \mathcal {B}
  \]
  is an isomorphism.
\end{proof}

\begin{remark}
\label{rem-c-function}
Bernstein constructed the unit map for his second adjunction using the geometry of the homogeneous space $G/N$, and in particular the fact that if $\overline P = L \overline N$ is the opposite parabolic subgroup, then the product map
\[
\overline{N} \times L \times N \longrightarrow G
\]
embeds the left hand side as an open subset of $G$.  See for example \cite[Section 3.1]{Bernstein-p-adic}.  It is   not immediately apparent, but the unit described here is essentially the same, and differs only in that  we have used the function $x\mapsto x$ in place of (the reciprocal of) Harish-Chandra's $c$-function from the theory of spherical functions.  The $c$-function arises when one calculates Bernstein's unit map from the spectral, or Fourier dual, perspective.
\end{remark}

We can now prove a counterpart of Bernstein's second adjoint theorem:
 
\begin{theorem}
\label{thm-spectral-2nd-adj}
The bimodule map given by the formula \eqref{eq-spectral-berns-unit} is the unit map for an adjunction 
\[
 \Hom_{\mathcal B}(Y, \mathcal{F}\otimes_{\mathcal{B}} X) \cong \Hom_{\mathcal A}(\mathcal{E}\otimes_{\mathcal{B}} Y, X).
\]
\end{theorem}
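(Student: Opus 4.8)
The plan is to accompany the given unit \eqref{eq-spectral-berns-unit} with a counit and to check the two triangle identities, after which the unit--counit formalism recorded (for operator modules) in Lemma~\ref{lem-linking} yields the asserted adjunction with the map of Lemma~\ref{B-balanced-lemma} as its unit. The formal argument behind that lemma applies unchanged in the categories $\SFMod$, since it uses only the defining isomorphisms $\mathcal A\otimes_{\mathcal A}X\cong X$ and $\mathcal B\otimes_{\mathcal B}Y\cong Y$ of smooth modules. Here the functor $\mathcal E\otimes_{\mathcal B}(-)$ is the left adjoint and its right adjoint is the functor $X\mapsto \mathcal F\otimes_{\mathcal A}X$, and I write $\eta\colon\mathcal B\to\mathcal F\otimes_{\mathcal A}\mathcal E$ for the unit of \eqref{eq-spectral-berns-unit}.

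First I would construct the counit $\epsilon\colon \mathcal E\otimes_{\mathcal B}\mathcal F\to\mathcal A$. Since $\mathcal E=\mathcal F=\mathcal B=\mathcal S(\R)$ with $\mathcal B$ acting by pointwise multiplication, multiplication identifies $\mathcal E\otimes_{\mathcal B}\mathcal F\cong\mathcal S(\R)$ as symmetric $\mathcal A$-bimodules. I would then \emph{define}
\[
\epsilon(e\otimes f)=\frac{ef-w(ef)}{2x},
\]
so that $\epsilon$ extracts the odd part of $ef$ and divides by the function $x$. This lands in $\mathcal A=\mathcal S(\R)^{\Z/2\Z}$ because the quotient of an odd Schwartz function by $x$ is an even Schwartz function, and it is $\mathcal A$-bilinear because $w$ fixes even functions. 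The one technical input is continuity: the operation of dividing an odd function by $x$ is continuous on $\mathcal S(\R)$ by Hadamard's lemma, writing $(h/x)(y)=\int_0^1 h'(ty)\,dt$ and bounding the Schwartz seminorms of the quotient by those of $h$.

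With $\eta$ and $\epsilon$ in hand, the adjunction reduces to checking that
\[
\mathcal E\xrightarrow{\ \mathrm{id}\otimes\eta\ }\mathcal E\otimes_{\mathcal B}\mathcal F\otimes_{\mathcal A}\mathcal E\xrightarrow{\ \epsilon\otimes\mathrm{id}\ }\mathcal E
\quad\text{and}\quad
\mathcal F\xrightarrow{\ \eta\otimes\mathrm{id}\ }\mathcal F\otimes_{\mathcal A}\mathcal E\otimes_{\mathcal B}\mathcal F\xrightarrow{\ \mathrm{id}\otimes\epsilon\ }\mathcal F
\]
are the canonical multiplication isomorphisms. By smoothness of the modules it suffices to evaluate on a simple tensor, writing $e=e'b$ (respectively $f=bf'$) and $b=b_1b_2$, and to substitute the formula for $\eta$. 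In both cases the two resulting terms combine, after using the parity relation $w(xg)=-x\,w(g)$, into $\bigl(x\,\epsilon(g)+\epsilon(xg)\bigr)$ times the leftover factor, where $g=e'b_1$ in the first identity and $g=b_2f'$ in the second. The description of $\epsilon$ above shows $x\,\epsilon(g)$ is the odd part of $g$ and $\epsilon(xg)$ is the even part of $g$, so $x\,\epsilon(g)+\epsilon(xg)=g$; reassembling with the leftover factor recovers the original element, and both composites are the identity.

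The main obstacle is the construction of the counit. The naive guess --- the symmetrization map $g\mapsto\tfrac12\bigl(g+w(g)\bigr)$, which is the $\mathcal A$-bilinear map underlying the \emph{first} adjoint theorem --- fails the triangle identities here by exactly a factor of the function $x$, forcing one instead to the ``antisymmetrize and divide by $x$'' map above. Recognizing this, and verifying that division by $x$ is a continuous operation on $\mathcal S(\R)$, is the only nonformal step. As Remark~\ref{rem-c-function} anticipates, this division by $x$ is precisely where the reciprocal of the $c$-function enters, matching Bernstein's geometric construction of the second adjunction.
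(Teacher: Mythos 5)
Your proposal is correct and takes essentially the same route as the paper: your counit $\epsilon(e\otimes f)=\frac{ef-w(ef)}{2x}$ is precisely the paper's map $e\otimes f\mapsto \frac{1}{x}(ef)^{-}$, and your verification of the two triangle identities (condensed into the identity $x\,\epsilon(g)+\epsilon(xg)=g$, i.e.\ odd part plus even part of $g$) is exactly the paper's elementary-tensor computation. The two points you spell out explicitly --- continuity of division by $x$ on odd Schwartz functions via Hadamard's lemma, and the transfer of the unit/counit formalism of Lemma~\ref{lem-linking} to the categories $\SFMod$ --- are left implicit in the paper but are correct and fill genuine (if routine) gaps.
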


\begin{proof}
In order to prove the theorem we need to find a suitable counit map
\[
\mathcal{E} \otimes _{ \mathcal{B}} \mathcal{F}  \to \mathcal{A} .
\]
We shall use the formula
\begin{equation}
\label{eq-berns-spectral-counit}
e\otimes f \mapsto \frac1x  (ef)^{-} 
\end{equation}
in which the superscript  ``$-$'' on the right means that we take the \emph{odd part} of the function $ef\in \mathcal{B}$ (a superscript ``$+$'' will likewise denote the even part of a function).

The composition 
\[
\mathcal{E}\otimes _{\mathcal{B}} \mathcal{B} \longrightarrow \mathcal{E} \otimes _{\mathcal{B}} \mathcal{F} \otimes _{\mathcal {A}}\mathcal{E} \longrightarrow \mathcal{A} \otimes _{\mathcal{A} } \mathcal{E}
\longrightarrow \mathcal{E}
\]
is given by the formula
\[
\begin{aligned}
e \otimes  b_1b_2  
& \mapsto 
e \otimes   b_1x \otimes b_2 +e  \otimes  b_1 \otimes  x b_2 \\
& \mapsto 
\frac1x  (eb_1x)^{-}  \otimes  b_2  + \frac1x  (eb_1)^{-}   \otimes  xb_2 \\
& \mapsto 
   \frac 1x  (eb_1x)^{-} b_2  +    \frac 1x  (eb_1)^{-} xb_2   \\ 
   & = 
    (eb_1)^{+}b_2  +     { (eb_1)^{-}} b_2   \\ 
& = 
 e  b_1b_2,
\end{aligned}
\]
and this is the standard multiplication map, as required.
In addition  the  composition 
\[
\mathcal{B}\otimes_\mathcal{B}  \mathcal {F} \longrightarrow \mathcal {F}\otimes_{\mathcal{A}}  \mathcal{E} \otimes_\mathcal{B}\mathcal {F} \longrightarrow \mathcal {F} \otimes _{\mathcal{A}}\mathcal{A}
\longrightarrow \mathcal{F}
\]
is given by the formula 
\[
\begin{aligned}
b_1b_2\otimes  f & \mapsto b_1x \otimes  b_2 \otimes  f  + b_1\otimes  xb_2 \otimes  f  \\
& \mapsto 
b_1x \otimes \frac1x (b_2f)^{-}  + b_1 \otimes  \frac 1x   (fb_2x)^{-}  \\  
& \mapsto 
b_1{ (b_2f)^{-}}  + b_1  \frac 1x (fb_2x)^{-}  \\
& = b_1(b_2f)^{-} + b_1 (b_2 f )^+   ,
\end{aligned}
\]
which gives us the standard module multiplication map once again, as required. 
\end{proof}

\begin{remark}
In the present context of Harish-Chandra spaces, the bimodules $\HC(G/N)$ and $\HC(G/\overline N)$ are in fact isomorphic to one another, so it is not possible to detect the use of the opposite parabolic subgroup, except indirectly through the geometric role it plays in giving the formula for the unit map, as indicated in Remark~\ref{rem-c-function}.
\end{remark}

 \subsection{Bernstein's Theorem and Operator Spaces}
 \label{sec-op-alg-bernstein}
 
 In this final section we shall adapt the Schwartz algebra computations of the previous section to the context of operator algebras. 
 
The formula \eqref{eq-berns-spectral-counit} for the Bernstein counit  does not make sense for arbitrary continuous functions, and so does not make sense at the level of (reduced) group  $C^*$-algebras. We will show however that the Bernstein reciprocity theorem of the previous section can be recovered after replacing $C^*$-algebras with non-self-adjoint operator algebras.

 Given $f\in C_0(\R)$, we shall continue to use the notation   
 \[
 f = f^+ + f^-
 \]
 for the decomposition of $f$ into its  even and odd parts. We shall also denote by  \[
 w \colon C_0(\R) \longrightarrow C_0(\R)
 \]
 the involution  given by the formula
 \[
 w(f)(x)=f(-x)
 .
 \]
 Let us now fix a smooth function $c$ on the line (with a singularity at $0\in \R$) with the following properties:
 \begin{enumerate}[\rm (a)]
 \item $c$ is odd,
 \item $c(x) = 1/x$ for $x$ near $0\in \R$, and 
 \item $c(x) = 1$ for large positive $x$.
 \end{enumerate}
 The notation is supposed to call to mind Harish-Chandra's $c$-function, which is the ultimate source of the function $c(x) = 1/x$ that appears in the previous section; see Remark~\ref{rem-c-function}.  We are simplifying matters somewhat here by insisting that $1/c$ is a smooth function, bounded at infinity (in the natural construction of the unit map, involving the actual $c$-function from representation theory, the boundedness condition does not hold).   But this is a relatively minor issue; see Remark~\ref{rem-c-function2} below.

 \begin{definition} 
 We shall denote by  $B\subseteq  C_0(\R)$   the space of those functions $f\in C_0(\R)$  for which the product $c\cdot f^{-}$ extends to a continuous (and necessarily even) function on $\R$.  Equivalently $B$ consists of those functions in $C_0(\R)$ whose odd  part  is differentiable at $0\in \R$.
 \end{definition}

\begin{lemma}
The formula
 \[ 
 \delta (b) = c\cdot b^{-}
 \]
defines a $w$-twisted    derivation from $B$ into $C_0(\R)$, so that 
\[
\delta(b_1b_2) = \delta(b_1)b_2 + w(b_1) \delta(b_2).
\]
for all $b_1,b_2\in B$.  As a result the formula  
 \begin{equation}
 \label{eq-B-embedding}
 b\longmapsto \begin{bmatrix} b & 0 \\ \delta(b) & w(b) \end{bmatrix}
 \end{equation}
 defines an algebra embedding of $B$ into the algebra $2\times 2$ matrices over $C_0(\R)$. \qed
\end{lemma}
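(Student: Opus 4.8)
The plan is to isolate the single formal identity underlying the twisted-derivation property and then attend separately to the two analytic points---that $\delta$ actually lands in $C_0(\R)$, and that $B$ is closed under multiplication so that the identity is even meaningful. For the formal part, I would first record that $w$ is an algebra homomorphism of $C_0(\R)$, since $w(b_1 b_2)(x) = (b_1 b_2)(-x) = w(b_1)(x)\,w(b_2)(x)$, and that the odd part is $b^- = \tfrac12\bigl(b - w(b)\bigr)$, so that $\delta(b) = \tfrac{c}{2}\bigl(b - w(b)\bigr)$. Expanding the right-hand side of the desired identity and using that multiplication of functions is commutative (so $w(b_1)\,c = c\,w(b_1)$),
\[
\delta(b_1)b_2 + w(b_1)\delta(b_2) = \tfrac{c}{2}\bigl(b_1 - w(b_1)\bigr)b_2 + \tfrac{c}{2}w(b_1)\bigl(b_2 - w(b_2)\bigr),
\]
the two occurrences of $\tfrac{c}{2}w(b_1)b_2$ cancel, leaving $\tfrac{c}{2}\bigl(b_1 b_2 - w(b_1)w(b_2)\bigr) = \tfrac{c}{2}\bigl(b_1 b_2 - w(b_1 b_2)\bigr) = \delta(b_1 b_2)$. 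This is an identity of functions on $\R\setminus\{0\}$, where $c$ is an ordinary function.

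Next I would dispatch the analytic content, which is where the singularity of $c$ must be watched. That $\delta(b)\in C_0(\R)$ for $b\in B$ is precisely the defining condition on $B$ near the origin, namely that $c\cdot b^-$ extends continuously there; away from the origin $c$ is bounded---indeed $c(x)=\pm1$ for large $|x|$---and $b^-\in C_0(\R)$, so $c\cdot b^-$ vanishes at infinity, and being a product of two odd functions it is even. That $B$ is a subalgebra I would deduce from the parity identity $(b_1 b_2)^- = b_1^+ b_2^- + b_1^- b_2^+$, which yields
\[
c\cdot(b_1 b_2)^- = b_1^+\,(c\cdot b_2^-) + (c\cdot b_1^-)\,b_2^+ ;
\]
the right-hand side is continuous on all of $\R$ because $b_1^+, b_2^+\in C_0(\R)$ and the factors $c\cdot b_i^-$ extend continuously by hypothesis. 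This simultaneously makes $\delta(b_1 b_2)$ meaningful and shows $b_1 b_2\in B$. Since both sides of the derivation identity now lie in $C_0(\R)$ and agree on the dense set $\R\setminus\{0\}$, they agree everywhere.

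Finally, for the matrix embedding I would simply compute
\[
\begin{bmatrix} b_1 & 0 \\ \delta(b_1) & w(b_1) \end{bmatrix}
\begin{bmatrix} b_2 & 0 \\ \delta(b_2) & w(b_2) \end{bmatrix}
=
\begin{bmatrix} b_1 b_2 & 0 \\ \delta(b_1)b_2 + w(b_1)\delta(b_2) & w(b_1)w(b_2) \end{bmatrix},
\]
identifying the $(2,1)$-entry with $\delta(b_1 b_2)$ by the twisted-derivation identity and the $(2,2)$-entry with $w(b_1 b_2)$ by multiplicativity of $w$; hence the map is an algebra homomorphism, and it is injective because the $(1,1)$-entry returns $b$. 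The main obstacle is not the algebra, where the cancellation is immediate, but the bookkeeping around the pole of $c$ at $0$: every function written down must be verified to lie in $C_0(\R)$ and not merely in the continuous functions on $\R\setminus\{0\}$, and this is exactly why $B$ is cut out by a continuous-extension condition and why the subalgebra computation $c\cdot(b_1 b_2)^- = b_1^+(c\,b_2^-) + (c\,b_1^-)b_2^+$ is the real crux.
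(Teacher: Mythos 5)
Your proof is correct. The paper states this lemma without proof (it is marked \qed as a routine verification), and your argument supplies exactly the computation being left to the reader: the formal cancellation via $\delta(b)=\tfrac{c}{2}\bigl(b-w(b)\bigr)$, the parity identity $(b_1b_2)^- = b_1^+b_2^- + b_1^-b_2^+$ showing $B$ is closed under multiplication and that $\delta(b_1b_2)$ lies in $C_0(\R)$, and the continuity/density argument handling the pole of $c$ at the origin.
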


\begin{lemma}
The image of $B$ in $M_2(C_0(\R))$ under the embedding \eqref{eq-B-embedding} is a norm-closed subalgebra.  \qed
 \end{lemma}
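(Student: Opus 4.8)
The previous lemma already establishes that \eqref{eq-B-embedding} is an injective algebra homomorphism, so its image is a subalgebra of $M_2(C_0(\R))$ and only norm-closedness remains to be checked. The plan is to take an arbitrary norm-convergent sequence of elements of the image and show that its limit again lies in the image. The single structural fact I would exploit throughout is that the $C^*$-norm on $M_2(C_0(\R))$ dominates the supremum norm of each matrix entry: for a $2\times 2$ scalar matrix the modulus of every entry is bounded by the operator norm, so taking the supremum over $\R$ shows that convergence in $M_2(C_0(\R))$ forces uniform convergence of each entry in $C_0(\R)$. This reduces the whole problem to a statement about uniform limits of ordinary functions.

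So I would suppose $b_n\in B$ and that the images of the $b_n$ under \eqref{eq-B-embedding} converge in norm to some $T\in M_2(C_0(\R))$, writing its entries as $g_{11},g_{12},g_{21},g_{22}$. Entrywise convergence then gives $b_n\to g_{11}=:g$, $g_{12}=0$, $w(b_n)\to g_{22}$ and $\delta(b_n)\to g_{21}$, all uniformly. Since $w$ is an isometry of $C_0(\R)$, passing to the limit in $w(b_n)$ yields $g_{22}=w(g)$. The content of the lemma is then reduced to the two remaining assertions, that $g\in B$ and that $g_{21}=\delta(g)$; once these hold, $T$ is visibly the image of $g$ under \eqref{eq-B-embedding} and we are done.

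The heart of the matter is the behaviour at the singularity $x=0$ of $c$. Away from $0$ everything is elementary: the map $b\mapsto b^{-}=\tfrac12(b-w(b))$ is continuous on $C_0(\R)$, so the uniform convergence $b_n\to g$ gives $b_n^{-}\to g^{-}$, and hence for each fixed $x\neq 0$ one has $c(x)b_n^{-}(x)\to c(x)g^{-}(x)$. Comparing this with $\delta(b_n)=c\cdot b_n^{-}\to g_{21}$ shows that $c\cdot g^{-}=g_{21}$ on $\R\setminus\{0\}$. But $g_{21}$ is a member of $C_0(\R)$ and so is continuous at $0$; therefore $c\cdot g^{-}$, a priori defined and continuous only away from $0$, extends continuously across the origin, its extension being precisely $g_{21}$. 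This is exactly the defining condition for membership in $B$, so $g\in B$, and then $\delta(g)=c\cdot g^{-}=g_{21}$ by construction. I expect this passage across the singularity to be the only genuinely delicate point: the key observation is that continuity of the limiting entry $g_{21}$ is what certifies the continuous extension of $c\cdot g^{-}$, and with it the closedness of the subalgebra.
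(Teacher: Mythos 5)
Your proof is correct. The paper states this lemma without proof, treating it as routine, and your argument---norm convergence in $M_2(C_0(\R))$ forces entrywise uniform convergence, $w$ is isometric, and continuity of the limiting $(2,1)$-entry is exactly what certifies the continuous extension of $c\cdot g^{-}$ across the origin, hence $g\in B$ with $\delta(g)=g_{21}$---is precisely the verification the authors leave to the reader.
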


We shall equip the algebra $B$ with the operator algebra structure it receives from the embedding \eqref{eq-B-embedding}. 
In addition, let  $A $ be the $C^*$-algebra of even  functions in $C_0(\R)$.  It embeds completely isometrically into $B$, and 
\[
\overline{AB} = B = \overline{BA}
\]
(indeed the closures are superfluous).

Let $E=B$, considered as an operator $A$-$B$-bimodule, and let $F=B$ considered as an operator $B$-$A$-bimodule. Frobenius reciprocity, or the assertion that the tensor product functor 
\[
F\colon  \OMod_A\longrightarrow  \OMod_B 
\]
is left-adjoint to the tensor product functor
\[
E 
\colon \OMod_B\longrightarrow  \OMod_A 
 \]
holds as in Example \ref{operator_Frobenius_example}. But in addition these modules   satisfy the following version of Bernstein reciprocity:

\begin{theorem}\label{operator_Bernstein_theorem}
 The tensor product functor $E$ is left-adjoint to the tensor product functor $F$: there is a natural isomorphism
 \[ \CB_B(Y, F\otimes _{hA} X) \cong \CB_A( E \otimes_{hB}  Y, X).\]
\end{theorem}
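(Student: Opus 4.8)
The plan is to construct a unit and a counit and then invoke Lemma~\ref{lem-linking}, exactly as in the proof of Theorem~\ref{operator_Frobenius_theorem}. Here $E=F=B$, so a counit is an $A$-bimodule map $\epsilon\colon E\otimes_{hB}F\to A$ and a unit is a $B$-bimodule map $\eta\colon B\to F\otimes_{hA}E$. I would define them by copying the Schwartz-algebra formulas \eqref{eq-berns-spectral-counit} and \eqref{eq-spectral-berns-unit}, replacing the function $x\mapsto x$ by $1/c$ and its reciprocal by $c$: thus $\epsilon$ is the canonical isomorphism $E\otimes_{hB}F\cong B$ followed by $b\mapsto\delta(b)=c\,b^{-}\in A$, that is $\epsilon(e\otimes f)=\delta(ef)$, while $\eta$ is the canonical isomorphism $B\cong B\otimes_{hB}B$ followed by $b_1\otimes b_2\mapsto \tfrac1c b_1\otimes b_2+b_1\otimes \tfrac1c b_2$.

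First I would check that $\eta$ is well defined, i.e.\ that the displayed expression depends only on the product $b_1b_2$ and is $B$-balanced. This is the exact analogue of Lemma~\ref{B-balanced-lemma}, proved the same way: an element of $B$ decomposes as $b=b^{+}+\tfrac1c\,\delta(b)$ with $b^{+},\delta(b)\in A$, and it suffices to treat separately the case $b\in A$ (trivial, as the tensor is $A$-balanced) and the case $b=b^{-}$, which reduces to a one-line computation using that $\tfrac1{c^2}\,\delta(b)\in A$ (since $1/c$ is odd, with $1/c^2$ even and bounded). This simultaneously shows that $\eta$ is a $B$-bimodule homomorphism.

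The substantive point—and the reason the non-self-adjoint operator-algebra structure \eqref{eq-B-embedding} on $B$ was introduced—is complete boundedness. For $\epsilon$ this is immediate, since $\delta$ is the $(2,1)$-corner of the completely isometric embedding \eqref{eq-B-embedding} and is therefore completely contractive into $A$. For $\eta$ I would reduce complete boundedness to the claim that multiplication by $1/c$ is a completely bounded map $B\to B$; granting this, $\eta$ is the sum of the two completely bounded $A$-balanced bilinear maps $(b_1,b_2)\mapsto\tfrac1c b_1\otimes b_2$ and $(b_1,b_2)\mapsto b_1\otimes\tfrac1c b_2$ into $F\otimes_{hA}E$, which by the balancing just established factors through $B\otimes_{hB}B\cong B$. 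To prove the claim I would verify the matrix identity
\[
\begin{bmatrix} 1/c & 0 \\ 1 & -1/c \end{bmatrix}
\begin{bmatrix} b & 0 \\ \delta(b) & w(b) \end{bmatrix}
=
\begin{bmatrix} b/c & 0 \\ b^{+} & -w(b)/c \end{bmatrix},
\]
whose right-hand side is exactly the image of $b/c$ under \eqref{eq-B-embedding}. Since the left multiplier here has bounded entries, left multiplication by it is completely bounded on $M_2(C_0(\R))$, and as \eqref{eq-B-embedding} is completely isometric, multiplication by $1/c$ is completely bounded on $B$, as claimed. I expect this to be the main obstacle.

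It then remains to verify the two triangle identities of Lemma~\ref{lem-linking}: that the composites $E\otimes_{hB}B\to E\otimes_{hB}F\otimes_{hA}E\to A\otimes_{hA}E\to E$ and $B\otimes_{hB}F\to F\otimes_{hA}E\otimes_{hB}F\to F\otimes_{hA}A\to F$ are the canonical isomorphisms. These are the same computations as in the proof of Theorem~\ref{thm-spectral-2nd-adj} with $1/c$ in place of $x$; the only facts needed are $\delta(b)=c\,b^{-}$ and $c\,(b/c)^{-}=b^{+}$, which recombine the even and odd parts into $b^{+}+b^{-}=b$. Once $\eta$ and $\epsilon$ are known to be well-defined completely bounded bimodule maps these identities are routine, and Lemma~\ref{lem-linking} then delivers the natural isomorphism of the theorem.
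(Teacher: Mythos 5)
Your proposal is correct and follows essentially the same route as the paper's proof: the same unit $b_1b_2\mapsto \frac{b_1}{c}\otimes b_2+b_1\otimes\frac{b_2}{c}$ and counit $e\otimes f\mapsto c\,(ef)^-$, well-definedness via the argument of Lemma~\ref{B-balanced-lemma} using $b=b^++\frac1c\,\delta(b)$, complete boundedness of $\delta$ read off from the corner of the embedding \eqref{eq-B-embedding}, and the triangle identities verified exactly as in Theorem~\ref{thm-spectral-2nd-adj}. The one point where you go beyond the paper is your explicit (and correct) matrix identity showing that multiplication by $1/c$ is completely bounded on $B$ in its exotic operator space structure---the paper simply asserts this from the fact that $1/c$ is a bounded multiplier of $C_0(\R)$---and this extra care is warranted, since the relevant operator space structure on $B$ is the one induced by \eqref{eq-B-embedding} rather than the one inherited from $C_0(\R)$.
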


\begin{proof}
We want to define a unit   map  
\begin{equation*}
\eta\colon  B\longrightarrow  F\otimes_{hA} E 
\end{equation*}
by the formula 
\begin{equation}\label{eq-c-fn-unit}
 b_1b_2\longmapsto \frac {b_1}c\otimes b_2 + b_1\otimes\frac {b_2} c .
\end{equation}
{ The formula gives a well-defined map by the argument of Lemma \ref{B-balanced-lemma}, which applies here because every element of $B$ is of the form $a_1+a_2/c$. The map is completely bounded because the function $\frac{1}{c}$ is a bounded multiplier of $C_0(\R)$. Clearly $\eta$ is a $B$-bimodule map.
}

In addition, define a counit map 
\[ 
\epsilon \colon  E\otimes_{hB} F \longrightarrow  A
\]
by the formula
\[ 
  e \otimes f \longmapsto  c (e  f)^{-}
.
\]
{This is certainly an $A$-bimodule map.}  It can  be viewed as the composition
 \[
 \xymatrix{
 B\otimes _{hB} B \ar[r]  & B \ar[r]^\delta  & A
 }
 \]
 in which the first map is just the  multiplication map on $B$, which is completely bounded.  As for $\delta$, it is the restriction to $B$ of the completely bounded map 
 \[
 T\longmapsto \begin{bmatrix}  0 & 1\end{bmatrix}  T  \begin{bmatrix} 1 \\ 0\end{bmatrix} 
 \]
from  $M_2(C_0(\R))$ to $C_0(\R)$, and so it too is completely bounded.

The verification, now, that $\eta$ and $\epsilon$ are the unit and counit of an adjunction is   exactly as in the proof of Theorem \ref{thm-spectral-2nd-adj}.
\end{proof}

 \begin{remark}
 \label{rem-c-function2}
 If the function $1/c$ was unbounded (as it would be if we were to use the natural, representation-theoretic $c$-function), then our formula   \eqref{eq-c-fn-unit} for the unit map $\eta$ would give an unbounded, densely-defined $B$-bimodule map.  Its domain, an ideal in ${B}$, would be an operator algebra in its own right, and we could repeat the above argument with this algebra in place of the original ${B}$. 
 \end{remark}


\end{document}